\documentclass[11pt,letterpaper]{amsart}
\usepackage{enumitem}
\usepackage{breqn}
\usepackage{comment}
\usepackage{setspace}
\usepackage{bm}
\usepackage{amssymb,amsmath,xcolor,amsthm}
\usepackage{mathtools}
\usepackage{cleveref}
\mathtoolsset{showonlyrefs}
\usepackage{graphicx}

\newcommand{\bi}{\bar{i}}
\newcommand{\bj}{\bar{j}}
\newcommand{\bk}{\bar{k}}
\newcommand{\bl}{\bar{l}}

\newcommand{\bp}{\bar{p}}

\newcommand{\sbpartial}{\bar{\partial}^*}
\newcommand{\spartial}{\partial^*}

\newcommand{\bpartial}{\bar{\partial}}
\newcommand{\bnabla}{\overline{\nabla}}
\newcommand{\pbp}{\partial \bar{\partial}}

\newcommand{\Ker}{\mbox{Ker}}

\newcommand{\ol}{\overline}

\newtheorem{theorem}{Theorem}[section]
\newtheorem{claim}{Claim}[section]
\newtheorem{lemma}[theorem]{Lemma}

\newtheorem{corollary}[theorem]{Corollary}

\newtheorem{definition}[theorem]{Definition}

\numberwithin{equation}{section}

\begin{document}

\title[Monge-Amp\`ere-type equation and Demailly's Morse inequality]{Monge-Amp\`ere-type equation for forms of positive degree and Demailly's transcendental Morse inequality}

\author{Mathew George}
\date{}

\begin{abstract}
    The Monge-Ampère-type equation for forms of positive degree was introduced by Dinew and Popovici to prove the qualitative part of an analytic version of Demailly's transcendental Morse inequality for higher cohomology classes, conditional on the solvability of this nonlinear PDE. In this paper, we show that this can be proven unconditionally. We first show that the originally proposed Laplacian trace condition, $\Lambda^{m-2}\Delta u = 0$, is analytically too rigid to permit solutions and hence introduce a general gauge-fixing that leads to the $(a,b)$ Monge-Ampère-type equation. By deriving \emph{a priori} estimates, we establish the solvability of this class of equations for different parametric values $(a,b)$. As a geometric application, for $b=0$ this framework reduces to the classical complex Monge-Amp\`ere equation, which yields the qualitative Demailly's inequality for higher-degree forms. Further, we prove that the uniqueness of these solutions can be significantly strengthened under the kernel condition $\bar{\partial}^* u = 0$.
\end{abstract}

\subjclass[2020]{Primary 32W20; Secondary 32U40, 35J60}

\maketitle

\bigskip

\section{Introduction}

A central theme in complex geometry is the detection of strictly positive currents using purely cohomological volume thresholds. Demailly’s transcendental Morse inequalities \cite{Dem2010} provide such a criterion bypassing explicit constructions of positive currents. Dinew and Popovici \cite{DP25} introduced Monge-Ampère-type equation for forms of positive degree to extend these bigness criteria to higher-degree $(m,m)$-forms. Specifically, on an $n$-dimensional compact Kähler manifold $(M, \omega)$, for $ 1< m < n$ the Monge-Amp\`ere-type equation for forms of positive degree can be written as

\begin{equation}\label{pde}
    \left[\star_{\omega}\left((\alpha + i \pbp u)\wedge \omega_{n-m-1}\right)\right]^n = dV,
\end{equation}

\

\noindent for an $(m,m)$ form $\alpha >0$ and $dV$ a smooth volume form on $M$. The classical version \cite{BDPP13,Popovici16} of Demailly's transcendental Morse inequality for $(1,1)$ forms states that if $\alpha$ and $\beta$ are nef classes on a compact $n$-dimensional K\"ahler manifold, the difference $[\alpha] - [\beta]$ contains a strictly positive K\"ahler current, given that the following volume threshold holds

$$\int_M \alpha^n - n \int_M \alpha^{n-1} \wedge \beta >0.$$

\

Originally stated in \cite[Conjecture $10.1$, $(ii)$]{BDPP13}, weaker versions were shown in \cite{BDPP13} and \cite{Xiao2015}. The qualitative part of the conjecture was resolved by Popovici \cite{Popovici16}. This inequality and its general version are crucial ingredients for extending the fundamental duality between the pseudo-effective cone $\mathcal{E}$ and the cone of movable classes $\mathcal{M}$ on projective manifolds to compact K\"ahler manifolds. In \cite{DP25}, this is generalized to higher-degree $(m,m)$-forms assuming the solvability of the Monge-Amp\`ere-type equation for forms of positive degree under some gauge conditions. Specifically, it was shown that for a strictly strongly positive closed $(m,m)$-form $\beta$, the class $[(\alpha_{\omega})_m - \beta]_{BC}$ contains a strictly positive current if the integral inequality

\

$$ \frac{1}{n!}\int_M \alpha_{\omega}^n - \frac{1}{(n-m)!}\int_M\alpha_{\omega}^{n-m}\wedge \beta >0  $$

\

\noindent holds, assuming that $\alpha$ is a $d$-closed $(m,m)$ form satisfying $\Delta \alpha_{\omega} = 0$. To construct this current, the solvability of the Monge-Ampère-type equation \eqref{pde} is assumed under

$$    \alpha + i \pbp u >0, \quad u \in \Ker (\spartial) \cap \Ker( \bar{\partial}^*), \quad \text{ and }\Lambda^{m-2} \Delta u =0.$$

\

This set of  conditions is over-constraining and cannot produce solutions except in the trivial case when $dV$ is a constant multiple of $(\alpha_{\omega})^n$. In this paper, we replace $\Lambda^{m-2} \Delta u =0$ by more general condition of the form

\begin{equation}\label{gen-IC}
    (m-1)\Lambda^{m-2} \Delta u = (1+a) i \pbp \Lambda^{m-1}  u +  (1+b) \Lambda^{m-1} \Delta u \;\omega.
\end{equation} 

\

\noindent for any two real constants $a$ and $b$ satisfying $a + nb = m-n-2$. This is a natural gauge-fixing for equation \eqref{pde} as it constrains the relationship between the three $(1,1)$ forms that govern the equation: the complex Hessian $i \pbp \Lambda^{m-1}u$, the form $(\Lambda^{m-1} \Delta u) \; \omega$ and the $(1,1)$ part of the Laplacian $\Delta u$.

\

 Denote the primitive coordinates of the solution $u \in C^{\infty}_{m-1,m-1}(M, \mathbb R)$ of \eqref{pde} by $\psi_k$, for $0 \leq k \leq m-1$. Then it can be shown that equation \eqref{pde} depends only on the first two primitives $\psi_0$ and $\psi_1$. The condition $\Lambda^{m-2} \Delta u = 0$ would imply that $\psi_0$ and $\psi_1$ are harmonic. So $\psi_0$ is constant and $\Delta \psi_1 = 0$, which is incompatible with \eqref{pde}.

 \

Similar to \cite{DP25}, we begin our study of the PDE under condition $(1)$: the solution $u \in \Ker (\spartial) \cap \Ker( \bar{\partial}^*)$. Under $(1)$, it was shown that if the primitives of all orders greater than $0$ are equal, any two solutions are unique up to a factor of $c \omega^n$. In this paper, we show that the condition $(1)$ yields a stronger uniqueness theorem which is stated in Section $2$. This result relies on a cascade of relations between successive primitives  \eqref{cascade} of the solution $u$ coming from the condition $\partial^* u = 0$.

Independent of the geometric constraints required by the transcendental Morse inequality, we establish \emph{a priori} second-order and gradient estimates for this generalized class of Monge-Amp\`ere-type equations. The following theorem can be stated as a consequence. 

\begin{theorem}\label{exist-theorem}
  For any $(a,b)$ with $a+nb = m-n-2$ satisfying condition \ref{a} from Theorem \ref{ab-MAeqn-theorem}, there exists a form $u \in C^{\infty}_{m-1,m-1}(M, \mathbb R)$ and  a unique constant $c > 0$ satisfying

$$\left[\star_{\omega}\left((\alpha + i \pbp u)\wedge \omega_{n-m-1}\right)\right]^n = c \cdot dV,$$

\noindent with  

$$   \alpha + i \pbp u >0, \quad u \in \Ker (\spartial) \cap \Ker( \bar{\partial}^*),$$

\noindent and

$$ (m-1)\Lambda^{m-2} \Delta u = (1+a) i \pbp \Lambda^{m-1}  u +  (1+b) \Lambda^{m-1} \Delta u \;\omega.$$

\

If $(a,b)$ and $\alpha$ satisfies condition \ref{b} in Theorem \ref{ab-MAeqn-theorem}, with $X = \star_{\omega}(\alpha \wedge \omega_{n-m-1})$, then there exists a constant $\epsilon>0$ such that if $|dV - \log\det( \star_{\omega}(\alpha \wedge \omega_{n-m-1})) \omega^n|_{C^{0,\gamma}}< \epsilon$ for some $\gamma >0 $, then for some $c>0$ there exists a solution $u$ for this system.

\end{theorem}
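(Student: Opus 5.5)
Since the left-hand side of \eqref{pde} depends only on the first two primitive components $\psi_0,\psi_1$ of $u$, the plan is to reduce the system to a scalar (or low-rank) Monge--Amp\`ere-type equation and then call on Theorem \ref{ab-MAeqn-theorem}. We write $u=\sum_k \omega^{k}\wedge\psi_{m-1-k}$ in its Lefschetz decomposition, where $\psi_0$ is a function and $\psi_1$ is a primitive $(1,1)$-form. We then expand $\star_\omega\big((\alpha+i\pbp u)\wedge\omega_{n-m-1}\big)$ as $X+L(u)$, with $X=\star_\omega(\alpha\wedge\omega_{n-m-1})>0$ and $L$ a linear combination of $i\pbp\Lambda^{m-1}u$, $(\Lambda^{m-1}\Delta u)\,\omega$ and the $(1,1)$-part of $\Lambda^{m-2}\Delta u$. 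Substituting the gauge condition \eqref{gen-IC} eliminates the last of these. Because of the normalization $a+nb=m-n-2$, the resulting $(1,1)$-form is $X+i\pbp\varphi+b(\Delta\varphi)\omega$ up to the harmless scaling $\varphi=$ const$\cdot\Lambda^{m-1}u$, which is exactly the $(a,b)$ Monge--Amp\`ere equation of Theorem \ref{ab-MAeqn-theorem}.

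Next we solve the scalar equation $\big(X+i\pbp\varphi+b\Delta\varphi\,\omega\big)^n=c\,dV$. Under condition \ref{a}, the a priori $C^0$, gradient and second-order estimates established for this class give uniform $C^{2}$ bounds. Evans--Krylov (the operator is concave in the Hessian, being $\log\det$ composed with a linear map preserving positivity) and Schauder bootstrapping then give $C^\infty$ bounds. The continuity method along $dV_t=(1-t)X^n+t\,dV$, with $c_t$ fixed by integrating, closes the argument. Openness comes from the invertibility of the linearized operator $\mathrm{tr}_{\tilde X}(i\pbp\cdot+b\Delta\cdot\,\omega)$, an elliptic operator with kernel the constants when $1+nb>0$. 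Uniqueness of $c$ follows from the maximum principle applied to two solutions.

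Under condition \ref{b} there are no global estimates, so instead we apply the implicit function theorem in $C^{2,\gamma}\times\mathbb R\to C^{0,\gamma}$ around $\varphi=0$, $c=1$. The linearization at $X$ is surjective modulo constants, which the constant $c$ absorbs, and this gives the $\epsilon$-neighbourhood statement.

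Finally we reconstruct $u$. We set $\psi_0,\psi_1$ from $\varphi$ and the gauge \eqref{gen-IC}: the trace part determines $\psi_0$, and the primitive part is a linear elliptic equation $\Delta\psi_1=$ (primitive part of $i\pbp\varphi$), which is solvable since the right side is orthogonal to harmonic forms. The higher primitives $\psi_k$, $k\ge2$, are then chosen recursively via the cascade relations \eqref{cascade}, so that $\spartial u=0$ and $\bar\partial^* u=0$ hold; reality of $u$ is preserved because each step is real. Positivity $\alpha+i\pbp u>0$ in the $(m,m)$ sense has to be deduced from positivity of the $(1,1)$-form $X+L(u)$.

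The main obstacle is making this last compatibility rigorous. We must show that the cascade system for $\psi_k$, together with $\Ker(\spartial)\cap\Ker(\bar\partial^*)$, is solvable without over-determination, which means checking the integrability conditions on a compact K\"ahler manifold via Hodge theory. We must also show that the positivity of $X+L(u)$ transfers back to $\alpha+i\pbp u$. Our first attempt at the cascade would be to use the K\"ahler identities $[\Lambda,\bar\partial]=-i\partial^*$ to verify that each step has a right-hand side orthogonal to the relevant harmonic space.
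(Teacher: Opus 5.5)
Your reduction and scalar solve follow the paper's route: the gauge turns $Q(u)$ into $\frac{n!}{(n-m+1)!}(a\,i\pbp f+b\,\Delta f\,\omega)$, giving \eqref{ab-MAeqn}, and $f$ then comes from the Regime~1 estimates or the contraction argument of Theorem~\ref{QIFT}. The paper stops there and never reconstructs $u$ from $f$, so the step you call the main obstacle is exactly the one it takes for granted. It is a genuine gap, and your recursion cannot close it.

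The reason is that $\psi_1$ is overdetermined. You fix it from the primitive part of the gauge, $\Delta\psi_1=\lambda\,(i\pbp f)_{\mathrm{prim}}$ with $\lambda=\frac{(1+a)n(n-1)}{n-m+1}$. But the $k=1$ case of the cascade, \eqref{primitives12}, independently prescribes $\sbpartial\psi_1=-\frac{i(m-1)(n-1)}{n-m+1}\partial f$. Take $\Delta=\partial\spartial+\spartial\partial$ as in Section~2. Then $\sbpartial$ commutes with $\Delta$ and $\sbpartial(i\pbp f)_{\mathrm{prim}}=-\frac{i(n-1)}{n}\partial\Delta f$. Applying $\sbpartial$ to the first relation and $\Delta$ to the second gives $\bigl(\lambda-\frac{(m-1)n}{n-m+1}\bigr)\partial\Delta f=0$. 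Hence $f$ is constant unless $(1+a)(n-1)=m-1$, which never holds for $a=m-n-2$. Since this uses only $\psi_0$ and $\psi_1$, no choice of $\psi_2,\dots,\psi_{m-1}$ can repair it.

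Structurally, $Q(u)=\star_\omega(i\pbp u\wedge\omega_{n-m-1})$ is $\star_\omega$ of a $d$-closed form, so it is $d^*$-closed for every $u$. Indeed \eqref{Q-fp1} already shows that, under the kernel condition, $Q(u)$ is a fixed multiple of $\Delta f\,\omega+i\pbp f$ plus the $\sbpartial$-closed term $i\spartial\sbpartial\psi_2$. On the other hand, $\sbpartial(i\pbp f)=-i\partial\Delta f$ and $\sbpartial(\Delta f\,\omega)=i\partial\Delta f$. So $a\,i\pbp f+b\,\Delta f\,\omega$ is co-closed only if it is a multiple of the $(n-1)$-plurisubharmonic combination $(\Lambda\, i\pbp f)\,\omega-i\pbp f$, or if $f$ is constant. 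In particular, for $b=0$ (the case feeding Theorem~\ref{demailly-thm}) the system forces $f$ to be constant and collapses to $X^n=c\,dV$. This is the same rigidity the paper exhibits for $\Lambda^{m-2}\Delta u=0$.

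The positivity transfer also cannot work as stated. Strong positivity of $\alpha+i\pbp u$ implies positivity of $\star_\omega\bigl((\alpha+i\pbp u)\wedge\omega_{n-m-1}\bigr)$, but not conversely. Both your estimates and the paper's only keep the $(1,1)$-form $\tilde g_f$ positive, so $\alpha+i\pbp u>0$ would need an argument that neither supplies.

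A smaller point: $1+nb>0$ is not the right ellipticity criterion for your linearization. After normalizing $a$ to $1$, values $b\in(-1/n,0)$ can be non-elliptic (this is Regime~2), while $b\le -1$ is elliptic. The relevant lower bound is \eqref{Gg}, with $\min\{a+b,b\}$.
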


\

The solution $u$ cannot be guaranteed to be unique, although its lowest order primitive coordinates $\psi_0$ and $\psi_1$ are unique up to harmonic forms. Further uniqueness of higher order primitives could only be established under additional assumptions as in Theorem \ref{unique-thm}. As a consequence of the existence of such a form $u \in C^{\infty}_{m-1,m-1}(M, \mathbb R)$ for $b=0$ and $a = m-n-2 $, the Demailly's transcendental Morse inequality for higher cohomologies (Theorem \ref{demailly-thm}) can now be proven.

Independent of the geometric application, we study this equation for other parametric values $(a,b)$. In Section \ref{section3}, this system is reduced to the {\em $(a,b)$ Monge-Amp\`ere-type equation}: 

$$    \left(\star_{\omega}(\alpha \wedge \omega_{n-m-1}) + \frac{n!}{(n-m+1)!} \left[ a i\pbp f + b \Delta f \omega\right] \right)^n = dV.$$

\

We prove the following.

\begin{theorem} \label{ab-MAeqn-theorem}
For any pair $(a,b) \neq (0,0)$, consider the equation

\begin{equation}\label{ab-MAeqn1}
    \log\det(\tilde g_f) = c.\psi(z), \text{ with }\;\;\; \tilde g_f >0.
\end{equation}

\noindent where

$$\tilde g_f = X + \frac{n!}{(n-m+1)!} \left[ a i\pbp f + b \Delta f \omega\right],$$

\

\noindent with $X$ a positive $(1,1)$ form and $\psi(z) \in C^{\infty}(M)$ a positive function. Then the following statements are true.

\begin{enumerate}[label=\Roman*.]
    \item \label{a} If $a+b\geq 0$ and $b \geq 0$, \textbf{or} if $a+b \leq 0$ and $b \leq 0$, then for some $c>0$, there exists a solution $f$ for \eqref{ab-MAeqn1} unique up to constants.

    \item \label{b} If $a+b < 0$, $b > 0$ \textbf{or} $a+b >0$, $b < 0$, and $\kappa(X) < \dfrac{|a|}{n|b|}$, then there exists a constant $\epsilon>0$ such that if $|\psi - \log\det(X)|_{C^{0, \gamma}}< \epsilon$ for some $\gamma >0 $, then for some $c>0$ there exists a solution $f$ for \eqref{ab-MAeqn1} unique up to constants. Here $\kappa(X)$ denotes the condition number of $X$.
\end{enumerate}

\end{theorem}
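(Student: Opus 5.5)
The plan is to treat \eqref{ab-MAeqn1} as a fully nonlinear equation $F(\nabla^2 f)=c\,\psi$. Part~\ref{a} is handled by the continuity method with the constant $c$ as an extra unknown. Part~\ref{b} is handled by the implicit function theorem at $f=0$.

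\emph{Ellipticity.} Write $K=\frac{n!}{(n-m+1)!}$, let $\tilde g=\tilde g_f$, and let $\mu_1,\dots,\mu_n>0$ be the eigenvalues of $\tilde g^{-1}$ with respect to $\omega$. The linearization of $\log\det\tilde g_f$ at $f$ is
\[
L_f h = K\big(a\,\tilde g^{i\bar j}h_{i\bar j}+b\,(\mathrm{tr}_{\omega}\tilde g^{-1})\,\Delta h\big).
\]
Its symbol has eigenvalues $K(a\mu_i+b\sum_j\mu_j)$.
\begin{itemize}
\item If $b\ge 0$ and $a+b\ge 0$, then $a\mu_i+b\sum_j\mu_j=(a+b)\mu_i+b\sum_{j\ne i}\mu_j$, which is strictly positive since $(a,b)\neq(0,0)$.
\item If $b\le 0$ and $a+b\le 0$, the same identity shows the eigenvalues are strictly negative.
\end{itemize}
So in case~\ref{a} the equation is elliptic for every admissible $f$. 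Moreover $F$ is the concave function $\log\det$ composed with the linear map $A\mapsto aA+b(\mathrm{tr}A)\omega$. In the concave (positive) case $F$ fits the framework of Sz\'ekelyhidi's fully nonlinear equations $F(A[u])=h$. In the negative case I would rewrite the equation for $-f$.

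In the mixed-sign case~\ref{b}, ellipticity at $f=0$ means the numbers $a\mu_i+b\sum_j\mu_j$ all have one sign for the eigenvalues $\mu_i$ of $X^{-1}$. The hypothesis $\kappa(X)<|a|/(n|b|)$ should be exactly what forces this: the term $|a|\mu_{\min}$ dominates $n|b|\mu_{\max}\ge|b|\sum_j\mu_j$.

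\emph{Linear theory and openness.} Consider the map
\[
(f,c)\mapsto \log\det\tilde g_f-c\,\psi,
\]
with $f\in C^{2,\gamma}$ normalized by $\int f\,\omega^n=0$. Its linearization is $(h,s)\mapsto L_fh-s\psi$. Since $L_f$ is elliptic with no zeroth-order term, it has index zero. Its kernel is the constants, by the strong maximum principle. Its cokernel is spanned by a function $\rho$ of fixed sign, by Krein--Rutman / the adjoint maximum principle. Because $\psi>0$ we get $\int\psi\rho\neq 0$, so the linearization is an isomorphism onto $C^{0,\gamma}$.

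This already proves part~\ref{b} directly. At $f=0$, $c=1$, $\psi=\log\det X$ the map vanishes, so the implicit function theorem yields $\epsilon>0$ and a solution $(f,c)$ for every $\psi$ that is $\epsilon$-close to $\log\det X$ in $C^{0,\gamma}$. For part~\ref{a} I use the path
\[
\psi_t=t\psi+(1-t)\log\det X,
\]
which gives openness along $t\in[0,1]$.

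\emph{Uniqueness.} Take two solutions $(f_1,c_1)$ and $(f_2,c_2)$. Write $\log\det\tilde g_{f_1}-\log\det\tilde g_{f_2}=\int_0^1 L_{sf_1+(1-s)f_2}\,ds\,(f_1-f_2)$; the averaged operator is elliptic because the admissible set is convex. This gives $\tilde L(f_1-f_2)=(c_1-c_2)\psi$. Evaluating at a maximum and at a minimum of $f_1-f_2$ yields $c_1=c_2$. The strong maximum principle then shows $f_1-f_2$ is constant.

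\emph{Closedness: a priori estimates in case~\ref{a}.} Take the positive case.
\begin{enumerate}
\item Bound on $c$. At the maximum point of $f$ we have $i\partial\bar\partial f\le0$, so $a\,i\partial\bar\partial f+b\Delta f\,\omega\le 0$ by the eigenvalue computation above. Hence $c\,\psi\le\log\det X$ there. At the minimum point the reverse inequality holds. Since $\psi>0$, this gives uniform two-sided bounds on $c$.
\item One-sided Laplacian bound. Positivity of $\tilde g$ gives $\mathrm{tr}_\omega\tilde g=\mathrm{tr}_\omega X+K(a+nb)\Delta f>0$.
\item $C^0$ estimate. I expect this to follow from an ABP-type argument as in Sz\'ekelyhidi's work, using $\underline u=0$ as a $\mathcal C$-subsolution.
\item Second-order estimate. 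I plan a maximum principle for the largest eigenvalue of $\tilde g$, or for $\log\mathrm{tr}_\omega\tilde g-Af$, exploiting concavity of $F$. This should give $|\partial\bar\partial f|\le C(1+\sup|\nabla f|^2)$.
\item Gradient estimate. A blow-up argument (Dinew--Ko\l odziej / Sz\'ekelyhidi) then yields the gradient bound.
\item Higher regularity. Evans--Krylov gives $C^{2,\gamma}$, and Schauder bootstrapping gives smoothness.
\end{enumerate}

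The main obstacle is the second-order estimate in step~4. The operator couples $i\partial\bar\partial f$ with $\Delta f\,\omega$, so the third-order terms arising from differentiating the equation twice must be controlled through the concavity of $\log\det$ on the image of $A\mapsto aA+b(\mathrm{tr}A)\omega$. I would first try to adapt Sz\'ekelyhidi's proof, checking his structural conditions for
\[
f(\lambda)=\sum_i\log\Big(\lambda_i(X)+a\lambda_i+b\sum_j\lambda_j\Big)
\]
on the corresponding cone.
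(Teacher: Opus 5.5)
\textbf{Gap: the normalizing constant $c$.} You read $\log\det\tilde g_f=c\,\psi$ with $c$ multiplying the log-density. Under that reading you use $\psi>0$ in two places: surjectivity of $(h,s)\mapsto L_fh-s\psi$ via $\int\psi\rho\neq0$, and dividing by $\psi$ to bound $c$. Both are applied to $\psi_t=t\psi+(1-t)\log\det X$ along your path, and to $\log\det X$ at the base point of your implicit function argument in part II. Nothing makes $\log\det X$ positive, so $\psi_t$ may vanish or change sign. If $\int\rho\,\log\det X\,\omega^n=0$, openness already fails at $t=0$, and your bound on $c$ is lost.

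More seriously, you never show $c>0$, and in this reading it is false. For $X=\omega$ the pair $(f,c)=(0,0)$ solves the equation for every positive $\psi$, and your own uniqueness argument then forces $c=0$ for every solution. More generally, your bound $\inf_M(\log\det X/\psi)\le c\le\sup_M(\log\det X/\psi)$ says nothing about the sign of $c$, and it forces $c\le 0$ whenever $\det X\le 1$.

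\textbf{The fix, and part II.} The paper's proof actually uses the additive normalization $\log\det\tilde g_f=\psi+c$ (Theorem~\ref{QIFT}, and the form $c\cdot dV$ in Theorem~\ref{exist-theorem}). Equivalently $\det\tilde g_f=e^{c}e^{\psi}$, and the positive constant is $e^{c}$. With this normalization your argument repairs itself:
\begin{enumerate}
\item The linearization becomes $(h,s)\mapsto L_fh-s$, which is an isomorphism because $\rho$ has a sign, so $\int\rho\,\omega^n\neq0$.
\item The maximum principle gives $\inf_M(\log\det X-\psi_t)\le c_t\le\sup_M(\log\det X-\psi_t)$ along the whole path, with no sign condition on $\psi_t$.
\item Positivity of $e^{c}$ is automatic.
\end{enumerate}
Your part II then coincides with the paper's, which proves the same implicit function theorem by an explicit contraction, projecting onto $\{\int h\rho\,\omega^n=0\}$ instead of adjoining $c$ as an unknown.

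One smaller point in part II: your uniqueness step says the averaged operator is elliptic because the admissible set is convex. In Regime 2 the relevant set is not $\{\tilde g>0\}$ but the elliptic region $\{\lambda_{\max}(\tilde g)\sum_k\tilde g^{k\bar k}<|a|/|b|\}$. This region is convex, since $\tilde g\mapsto(\sum_k\tilde g^{k\bar k})^{-1}$ is concave, but you need to say so. The paper only claims uniqueness inside the small ball $B_R(0)$.

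\textbf{Comparison for part I.} Your route is a genuinely different but viable one: Sz\'ekelyhidi's structural theory, $\underline u=0$ as a $\mathcal C$-subsolution, and a blow-up for the gradient. Write $\tilde g=T(T^{-1}X+i\partial\bar\partial f)$ with $T(A)=aA+b(\mathrm{tr}_\omega A)\,\omega$. Then $\sum_i\log(a\lambda_i+b\sum_j\lambda_j)$ on $T^{-1}(\Gamma_n)$ satisfies his conditions provided $a\neq0$. In the positive case $a+nb>0$ holds automatically, and for $a=0$ the equation is a scalar equation in $\Delta f$.

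The paper avoids this machinery. For $a+b>0$, $b>0$ the symbol satisfies $G^{i\bar i}\ge\min\{a+b,b\}\sum_k\tilde g^{k\bar k}$ in every direction.
\begin{itemize}
\item \emph{Second order.} With the test function $\log\theta_1+\varphi(|\nabla f|^2)$, the term $\varphi'G^{1\bar1}\theta_1^2$ dominates every error of size $C\sum_k\tilde g^{k\bar k}$. The $|\nabla\theta_1|^2$ term is absorbed using $\varphi''=(\varphi')^2$.
\item \emph{Gradient.} The same inequality, together with $\sum_k\tilde g^{k\bar k}\ge ne^{-\psi/n}$, gives a direct gradient bound from $|\nabla f|^2+Af^2$.
\end{itemize}
So the second-order estimate you flagged as the main obstacle is elementary in the open regime. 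The paper refers the boundary lines $b=0$ and $a+b=0$ to the complex Monge--Amp\`ere and Tosatti--Weinkove literature, whereas your uniform approach would cover them directly.
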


\

In Section $2$, we establish the strengthened uniqueness theorem under the Kernel condition and also prove that the original Laplace Trace condition is too rigid. In Section $3$, we introduce the $(a,b)$ Monge-Amp\`ere-type equation, and show the application to analytical Demailly's Morse inequality. Finally in Section $4$, \emph{a priori} estimates are shown which implies Theorem \ref{exist-theorem} and Theorem \ref{ab-MAeqn-theorem}.

\section*{Acknowledgement and AI tool use}

The author would like to thank Sławomir Dinew for helpful communications. 

Gemini was used for literature search and improved writing at a few places.

\section{The Kernel Condition and Uniqueness}

\bigskip

The following notations will be used in this paper. For any $(m-1,m-1)$ form $\alpha$ and any $(1,1)$ form $\beta$

\begin{itemize}

    \item $\alpha_{\omega} = \star_{\omega}(\alpha \wedge \omega_{n-m-1})$ defines a $(1,1)$ form.

    \

    \item $\beta_{k}=\dfrac{\beta^k}{k!}.$

    \

    \item The $\partial$-Laplacian of the K\"ahler metric $\omega$ will simply be written $\Delta$ which is defined by $\partial \spartial + \spartial \partial$.

    \

    \item $\Lambda$ and $L$ denote the Trace operator and the Lefschetz operator for the metric $\omega$ respectively.
\end{itemize}

We recall the definition of strong positivity.

\begin{definition}
    An $(m,m)$ form $\alpha \in C^{\infty}_{m,m}(M,\mathbb R)$ is called {\bf strongly positive} if it can be decomposed as 

     $$\alpha =  \sum_{s} c_s \, (i\alpha^s_1 \wedge \bar{\alpha^s}_1) \wedge \dots \wedge (i\alpha^s_m \wedge \bar{\alpha^s}_m)$$

     \noindent for $c_s >0$ and $\alpha^s_i$ a $(1,0)$ form. We denote this by $\alpha >0$. If $c_s \geq 0$, then $\alpha$ is strongly semi-positive, denoted by $\alpha \geq 0$. 
\end{definition}

Given an $\alpha \in C^{\infty}_{m,m}(M, \mathbb R)$ with $\alpha >0$, the Monge-Amp\`ere equation for forms of positive degree for $u \in C_{m-1,m-1}^{\infty}(M, \mathbb R)$ can be written as

\begin{equation}\label{MA-eqn}
    \left[\star_{\omega}\left((\alpha + i \pbp u)\wedge \omega_{n-m-1}\right)\right]^n = dV
\end{equation}

\ 

\noindent assuming $\alpha + i \pbp u >0$.

\bigskip

{\flushleft \bm{{\bf The Kernel Condition $u \in \text{Ker}(\partial^*) \cap \text{Ker}(\bar{\partial}^*)$:}} }
\
\bigskip

Let $u \in C^{\infty}_{m-1,m-1}(M, \mathbb R)$ be a real form in $\mbox{Ker}(\partial^*)\cap \text{Ker}(\bar{\partial}^*)$. Write the Lefschetz decomposition of $u$ as

\begin{equation}
    \begin{aligned}
        u = f \omega^{m-1} + \psi_1 \wedge \omega^{m-2} + \sum\limits_{k =2}^{m-1}\psi_{k} \wedge \omega^{m-1-k}  
    \end{aligned}
\end{equation}

\noindent where $f \in C^{\infty}(M)$ and $\psi_{k} \in C^{\infty}_{k,k}(M)$ are the primitive coordinates of $u$ that satisfy $\Lambda(\psi_k) = 0$, or equivalently $\psi_{k} \wedge \omega^{n-2k+1} = 0$ for each $1 \leq k \leq m-1$. Then the Kernel condition becomes a system of first-order equations for primitives as follows. K\"ahler identity $[L, \sbpartial] = -i \partial $ implies the formula $\sbpartial (v \wedge \omega^p) = \sbpartial v \wedge \omega^{p}+ p \;i \partial v \wedge \omega^{p-1}$, so that

\begin{equation}\label{del-primitive}
    \begin{aligned}
        \sbpartial u = &\sbpartial f \wedge \omega^{m-1} + ( i(m-1)\partial f + \sbpartial \psi_1) \wedge \omega^{m-2} \\
        &+ \sum_{k=2}^{m-1}((m-k)i\partial \psi_{k-1} + \sbpartial \psi_{k} )\wedge\omega^{m-k-1}.
    \end{aligned}
\end{equation}

This is not a primitive decomposition since $\partial \psi_k$ is not a primitive, although $\sbpartial \psi_{k}$ is. Let

\begin{equation}\label{prim-decomp2}
\partial \psi_k = (\partial \psi_k)_{\text{prim}} + \eta_k \wedge \omega 
\end{equation}

\

\noindent be the primitive decomposition of $\partial \psi_k$, for $\eta_k\in C^{\infty}_{k,k-1}(M)$ a primitive form. The fact that $\eta_k$ is primitive follows from the following. The commutation relation $[\Lambda, \partial] = i\bar{\partial}^*$ implies $\Lambda(\partial \psi_k) = i \sbpartial \psi_k$. So $\Lambda(\partial \psi_{k})$ is primitive. If 

$$\eta_k = \eta_k^0 + \eta_k^1 \wedge \omega + \dots $$

\noindent is a primitive decomposition, then applying $\Lambda$ to \eqref{prim-decomp2} shows

\begin{equation}
    \begin{aligned}
    \Lambda(\partial\psi_k) = c_0 \eta_k^0 + c_1 \eta_k^1 \wedge \omega + \dots
    \end{aligned}
\end{equation}

\noindent for non-zero constants $c_i$. This means $\eta_k^i=0$ for $i\geq 2$ and $\eta^0_k$ is primitive.





\

We also have

\begin{equation}
    \begin{aligned}
        i \sbpartial \psi_k &= \Lambda(\partial \psi_k) \\
        &= \Lambda(\eta_k \wedge \omega) = (n-2k+1) \eta_k.
    \end{aligned}
\end{equation}

Hence

\begin{equation}
    \begin{aligned}
        (\partial \psi_k)_{\text{prim}} = \partial \psi_k - \frac{i \sbpartial \psi_k \wedge \omega}{n-2k+1}.
    \end{aligned}
\end{equation}

\

Plug this into \eqref{del-primitive} and using $\sbpartial f  =0$ we get the primitive decomposition of $\sbpartial u$.

\begin{equation}
    \begin{aligned}
        \sbpartial u &= (\sbpartial \psi_1 + i(m-1)\partial f + (m-2)i \eta_1) \wedge \omega^{m-2} \\
        &\;\;+ \sum_{k=2}^{m-1}\left[(m-k)i(\partial \psi_{k-1})_{prim} + \sbpartial \psi_{k} + (m-k-1)i\eta_{k}\right]\wedge\omega^{m-k-1}\\
        &=\left[\frac{n-m+1}{n-1}\sbpartial \psi_1 + i(m-1)\partial f \right] \wedge \omega^{m-2}+ \sum_{k=2}^{m-1}\Big[ (m-k)i\partial \psi_{k-1} \\
        &\;\;+ \frac{m-k}{n-2k+ 3} \sbpartial \psi_{k-1} \wedge \omega + \frac{n-m-k+2}{n-2k+1}\sbpartial \psi_{k} \Big]\wedge\omega^{m-k-1}
    \end{aligned}
\end{equation}

The Kernel condition $\sbpartial u =0 $ now gives the following general identity relating primitives of $u$.

\begin{equation}\label{cascade}
    (m-k)i\partial \psi_{k-1} + \frac{m-k}{n-2k+ 3} \sbpartial \psi_{k-1} \wedge \omega + \frac{n-m-k+2}{n-2k+1}\sbpartial \psi_{k} = 0
\end{equation}

\

\noindent for each $1 \leq k \leq m-1 $, assuming $\psi
_0 = f$. For $k=1$, this gives

\begin{equation}\label{primitives12}
    \sbpartial\psi_{1} = -\frac{i(m-1)(n-1)}{n-m+1} \partial f.
\end{equation}

\ 

We note the following implication which will not be used further. 

\begin{lemma}
    If $u$ satisfies $\sbpartial u =0$, then $\partial \sbpartial \psi_{k} = 0$ for all $0 \leq k \leq m-2$ and $k \neq \dfrac{n}{2}$. 

    \end{lemma}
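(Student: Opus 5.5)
Apply $\partial$ to the cascade identity \eqref{cascade} and argue by induction on $k$. Applying $\partial$ to \eqref{cascade} at level $k$ kills the term $\partial\psi_{k-1}$, since $\partial^2=0$. Because $\omega$ is closed, it leaves
\begin{equation}
\frac{m-k}{n-2k+3}\, \partial\sbpartial \psi_{k-1}\wedge\omega + \frac{n-m-k+2}{n-2k+1}\,\partial\sbpartial \psi_{k} = 0 .
\end{equation}
This is a linear relation between $\partial\sbpartial\psi_{k-1}\wedge\omega$ and $\partial\sbpartial\psi_k$ for each $1\le k\le m-1$. The base case comes from \eqref{primitives12}: $\sbpartial\psi_0 = \sbpartial f = 0$ trivially, as $f$ is a function. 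So $\partial\sbpartial\psi_0=0$, and the relation at $k=1$ gives $\partial\sbpartial\psi_1=0$ whenever $n-m+1\neq 0$, which holds since $m<n$.

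For the inductive step, assume $\partial\sbpartial\psi_{k-1}=0$. The displayed relation then gives $\frac{n-m-k+2}{n-2k+1}\partial\sbpartial\psi_k=0$. The coefficient $n-m-k+2$ is positive for $k\le m-1$ because $m<n$. So $\partial\sbpartial\psi_k=0$, and the induction runs all the way up to $k=m-1$, provided every coefficient appearing is finite and nonzero.

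The main obstacle is making sense of the denominators $n-2k+1$ and $n-2k+3$. These come from $\Lambda(\eta_k\wedge\omega)=(n-2k+1)\eta_k$, and they vanish exactly when $2k=n+1$ or $2k=n+3$. This is where the exclusion $k\neq n/2$ must enter. I would therefore not rely solely on the divided form of \eqref{cascade}. Instead I would redo the derivation with the multiplied-through identity
\begin{equation}
(n-2k+1)\,\eta_k = i\sbpartial\psi_k ,
\end{equation}
and track which levels degenerate.

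To see that primitivity constraints force the excluded cases, note that $\psi_k$ is a primitive $(k,k)$ form. It is therefore nonzero only for $2k\le n$, and at $2k=n$ the form $\partial\psi_k$ is automatically primitive-free in the relevant sense. At such a level the Lefschetz map $\wedge\omega$ on $(k,k-1)$-forms fails to be injective on the relevant primitive piece, so $\partial\sbpartial\psi_{k-1}\wedge\omega=0$ no longer implies $\partial\sbpartial\psi_{k-1}=0$. This is why the statement excludes $k=n/2$.

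I would handle it by checking that $\wedge\omega$ is injective on $(k,k)$-forms for $2k+2\le n+1$, via hard Lefschetz, and applying that injectivity when passing from the relation between $\partial\sbpartial\psi_{k-1}\wedge\omega$ and $\partial\sbpartial\psi_k$ at each step. The step I expect to need the most care is verifying precisely that $2k=n$ is the only level where injectivity or a coefficient fails, and that the chain for $k\le m-2$ never needs to pass through it in a way that breaks the induction.
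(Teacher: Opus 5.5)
Applying $\partial$ to \eqref{cascade} gives a correct relation, but the induction you build on it breaks down. The inductive step divides by $n-m-k+2$, and your claim that this is positive for $k\le m-1$ ``because $m<n$'' is false. It vanishes at $k=n-m+2$, which lies in $[1,m-2]$ and strictly below $\frac{n}{2}$ whenever $m>\frac{n}{2}+2$.

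For example, take $n=8$, $m=7$. The relation at $k=3$ reads $\frac{4}{5}\,\partial\sbpartial\psi_2\wedge\omega=0$ and says nothing about $\psi_3$, yet the lemma asserts $\partial\sbpartial\psi_3=0$. The next level does not recover it either. There the relation is $\partial\sbpartial\psi_3\wedge\omega=\partial\sbpartial\psi_4$, which couples $\psi_3$ to the middle-degree primitive $\psi_4$. Injectivity of $\wedge\omega$ alone cannot separate two unknown forms, so your proposed patch does not close the gap.

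Your account of the exclusion $k\neq\frac{n}{2}$ is also misplaced. The denominator $n-2k+1$ (resp.\ $n-2k+3$) vanishes only when $\psi_k$ (resp.\ $\psi_{k-1}$) would be a primitive form of degree $n+1$, hence identically zero. This can only happen for odd $n$, whereas $k=\frac{n}{2}$ matters only for even $n$.

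The paper avoids all of this by applying $\sbpartial$ rather than $\partial$ to \eqref{cascade}. This kills the $\sbpartial\psi_k$ term. The identity $[L,\sbpartial]=-i\partial$ turns $\sbpartial(\sbpartial\psi_{k-1}\wedge\omega)$ into $i\,\partial\sbpartial\psi_{k-1}$. Then $\sbpartial\partial=-\partial\sbpartial$ makes the two surviving terms multiples of the same form, giving $-\frac{(m-k)(n-2k+2)}{n-2k+3}\,i\,\partial\sbpartial\psi_{k-1}=0$ at each level separately, with no induction. The exclusion is exactly $n-2(k-1)\neq 0$.

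If you want to keep your $\partial$-route, the missing idea is that $\partial\sbpartial\psi_j$ is primitive, since $\Lambda\partial\sbpartial\psi_j=\partial\sbpartial\Lambda\psi_j+i\,\sbpartial\sbpartial\psi_j=0$. Your relation is then a Lefschetz decomposition of a $(k+1,k-1)$-form, so $\partial\sbpartial\psi_{k-1}\wedge\omega=0$ holds on its own. Injectivity of $L$ on primitive forms of degree $2(k-1)<n$ then gives $\partial\sbpartial\psi_{k-1}=0$ directly. The case $2(k-1)=n$, where $L$ annihilates primitive forms, is precisely the excluded one.
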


\begin{proof}
    Applying $\sbpartial$ to \eqref{cascade}, we get

\begin{equation}
    \begin{aligned}
        (m-k)i \sbpartial \partial \psi_{k-1} + \frac{(m-k)}{(n-2k+3)} i \partial \sbpartial \psi_{k-1} = 0
    \end{aligned}
\end{equation}

By the anti-commutation $\partial \sbpartial = -\sbpartial \partial$, we get the desired result.

\end{proof}

We recall the Dolbeault Green's operator $G_\partial : C^\infty_{p,q}(M) \to C^\infty_{p,q}(M)$ which is the unique bounded linear operator that commutes with $\Delta_\partial$ and satisfies $I = \mathcal{H}_\partial + \Delta_\partial G_\partial$, where $\mathcal{H}_\partial$ is the orthogonal projection onto the space of harmonic forms. In addition, $G_{\partial}$ satisfies

\

\begin{enumerate}
    \item $G_{\partial}(\alpha) = 0$ for harmonic forms $\alpha$.
    \item $G_{\partial}(\alpha)$ gives the unique solution to $\Delta_{\partial} \beta = \alpha$, that is orthogonal to harmonic forms assuming $\alpha$ is orthogonal to harmonic forms. 

    \item $G_{\partial}$ commutes with $\partial$ and $\spartial$.
\end{enumerate}

\

It is known that if $\partial \alpha = \beta$, $\alpha$ must have the form

\begin{equation}
    \alpha = G_{\partial} \partial^* \beta + \gamma.
\end{equation}

\noindent for any $\partial$-closed form $\gamma$. In fact, this is an immediate consequence of the Hodge theorem which on compact complex manifolds gives the decomposition

$$\alpha = \partial (G_{\partial} \spartial \alpha) + \spartial(G_{\partial} \partial \alpha) + \alpha_{H},$$

\

\noindent for $\alpha_H$ a harmonic form \cite[Chapter $0$]{Griffiths-Harris}. If $\alpha$ is real and $\sbpartial$-exact, the solution is unique with $\gamma =0$. From \eqref{cascade} $\psi_{k}$'s must have the form

\begin{equation}\label{dbar-problem}
    \begin{aligned}
         \psi_{k-1} = G_{\partial} \partial^*\left( \frac{i}{n-2k+ 3} \sbpartial \psi_{k-1} \wedge \omega + \frac{i(n-m-k+2)}{(m-k)(n-2k+1)}\sbpartial \psi_{k} \right) + \gamma
    \end{aligned}
\end{equation}

\noindent for all $2 \leq k \leq m-1$.

\

\begin{claim}\label{harmonicity-claim}
    In the above setting if $\psi_k$ is $\sbpartial$-exact for all $1 \leq k \leq m-2 $ and $\sbpartial \psi_{m-1} = 0$, then all the intermediate primitives $\psi_{k}$ for $1 \leq k \leq m-2$ are zero, and $f$ is a constant. If $\psi_k$'s are assumed to be only $\sbpartial$-closed, then all primitives up to $k=m-2$ are harmonic.
\end{claim}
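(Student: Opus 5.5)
The plan is a descending induction on $k$, driven by the cascade \eqref{cascade}. The key observation is that if $\sbpartial\psi_k=0$, then \eqref{cascade} at index $k$ reduces to
\[
(m-k)\,i\,\partial\psi_{k-1} + \frac{m-k}{n-2k+3}\,\sbpartial\psi_{k-1}\wedge\omega = 0 .
\]
The first term is of type $(k,k-1)$ and $\partial$-exact. The second is $\omega$ wedged with a primitive form. Since $\partial\psi_{k-1}=(\partial\psi_{k-1})_{prim}+\eta_{k-1}\wedge\omega$ with $(n-2k+3)\eta_{k-1}=i\sbpartial\psi_{k-1}$, I would split the displayed identity into its Lefschetz components. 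The primitive part gives $(\partial\psi_{k-1})_{prim}=0$. The $\omega$-part gives
\[
i\eta_{k-1}+\frac{1}{n-2k+3}\,\sbpartial\psi_{k-1}=0,
\]
that is, $\frac{2}{n-2k+3}\,\sbpartial\psi_{k-1}=0$ (keeping track of the factor of $i$ from $(n-2k+3)\eta_{k-1}=i\sbpartial\psi_{k-1}$). Hence $\sbpartial\psi_{k-1}=0$, and then $\partial\psi_{k-1}=0$. Starting from the hypothesis $\sbpartial\psi_{m-1}=0$ at $k=m-1$ and descending, I get
\[
\partial\psi_{j}=\sbpartial\psi_{j}=0 \quad\text{for all } 0\le j\le m-2,
\]
with $\psi_0=f$. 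The one place this breaks is when a coefficient such as $n-m-k+2$ or the combination above vanishes. That dimension-dependent degeneracy is where I expect the main bookkeeping obstacle, and I would check the constants using the explicit formula already derived for $\sbpartial u$.

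Next I would upgrade this to harmonicity. The forms $\psi_j$ are real, since $u$ is real and the Lefschetz decomposition is real. So $\partial\psi_j=0$ gives $\bpartial\psi_j=\overline{\partial\psi_j}=0$, and $\sbpartial\psi_j=0$ gives $\spartial\psi_j=0$ by conjugation. Thus $\psi_j$ is $\Delta_\partial$-harmonic for $j\le m-2$, which is the second assertion. For $f$, the relation \eqref{primitives12} combined with $\sbpartial\psi_1=0$ gives $\partial f=0$. A real function with $\partial f=0$ is constant.

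For the first assertion, I assume in addition that each $\psi_k$, $1\le k\le m-2$, is $\sbpartial$-exact, say $\psi_k=\sbpartial\beta_k$. Harmonic forms are $L^2$-orthogonal to the image of $\sbpartial$, by the Hodge decomposition recalled before \eqref{dbar-problem}, and on a Kähler manifold $\Delta_\partial=\Delta_{\bpartial}$. Hence
\[
\|\psi_k\|^2=\langle \psi_k,\sbpartial\beta_k\rangle=\langle \bpartial\psi_k,\beta_k\rangle=0,
\]
so $\psi_k=0$. This is equivalent to saying the solution $\gamma$ in \eqref{dbar-problem} is forced to vanish, consistent with the remark that real $\sbpartial$-exact solutions have $\gamma=0$.
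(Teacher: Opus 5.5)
\textbf{The descending induction fails at the $\omega$-component step.} Since $(n-2k+3)\eta_{k-1}=i\sbpartial\psi_{k-1}$, you have $i\eta_{k-1}=-\frac{1}{n-2k+3}\sbpartial\psi_{k-1}$. So $i\eta_{k-1}+\frac{1}{n-2k+3}\sbpartial\psi_{k-1}=0$ holds identically, and no factor $2$ appears.

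This is structural. The paper obtained \eqref{cascade} by substituting $(\partial\psi_{k-1})_{\text{prim}}=\partial\psi_{k-1}-\frac{i}{n-2k+3}\sbpartial\psi_{k-1}\wedge\omega$. Hence the first two terms of \eqref{cascade} are exactly $(m-k)\,i\,(\partial\psi_{k-1})_{\text{prim}}$. Thus $\sbpartial\psi_k=0$ yields only $(\partial\psi_{k-1})_{\text{prim}}=0$, i.e.\ $\partial\psi_{k-1}=\frac{i}{n-2k+3}\sbpartial\psi_{k-1}\wedge\omega$, and nothing propagates downward.

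The conclusion of your induction is in fact false. On any compact K\"ahler manifold with $n\ge 4$, take $m=3$, a nonconstant real $h$, and set
$H=\Lambda(i\pbp h)$, $\psi_1=i\pbp h-\frac{H}{n}\,\omega$, $\psi_2=0$, $f=-\frac{n-2}{2n}H$.
Then $\partial\psi_1=-\frac1n\partial H\wedge\omega$ and $\sbpartial\psi_1=-i\Lambda(\partial\psi_1)=\frac{i(n-1)}{n}\partial H$. A direct check gives $\sbpartial u=(2i\partial f+\sbpartial\psi_1)\wedge\omega+i\partial\psi_1=0$, hence $\spartial u=0$ by reality, and $\sbpartial\psi_{m-1}=0$. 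Yet $\sbpartial\psi_1\neq 0$ and $f$ is not constant.

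\textbf{How to repair it.} The hypotheses of the claim already give $\sbpartial\psi_j=0$ at every level, so no descent is needed. In the first part this holds because $\psi_j=\sbpartial\beta_j$ and $(\sbpartial)^2=0$, together with the assumption on $\psi_{m-1}$. In the second part it is assumed outright.

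Then for each $1\le j\le m-2$, \eqref{cascade} at $k=j+1$ gives $(\partial\psi_j)_{\text{prim}}=0$. Also $\eta_j=\frac{i}{n-2j+1}\sbpartial\psi_j=0$, so $\partial\psi_j=0$. Finally \eqref{primitives12} gives $\partial f=0$. From there your reality argument for harmonicity and your computation $\|\psi_j\|^2=\langle\bpartial\psi_j,\beta_j\rangle=0$ go through unchanged.

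Repaired this way, your route is more direct than the paper's. The paper first rewrites the cascade with the Green operator as $\psi_{m-2}=G_\partial\spartial\left(\frac{i}{n-2m+5}\sbpartial\psi_{m-2}\wedge\omega\right)$, using exactness to discard the $\partial$-closed ambiguity $\gamma$ in \eqref{dbar-problem}. It then shows via K\"ahler identities that a real fixed point of $D_k$ vanishes, and inducts downward. Once you notice that $\sbpartial$-exactness already forces $\sbpartial\psi_j=0$, the Green-operator step is unnecessary.
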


\








\begin{proof}

From \eqref{dbar-problem}, it follows that 

\begin{equation}\label{claim1.1-1}
    \psi_{m-2} = G_{\partial} \partial^*\left( \frac{i}{n-2m+ 5} \sbpartial \psi_{m-2} \wedge \omega \right)
\end{equation}

Let $D_k$ be the differential operator 

$$D_k \alpha =\frac{i}{n-2k+3} G_{\partial} \partial^*\left(  L \sbpartial \alpha \right).$$

\ 

Then we show that $D_k(\alpha) = \alpha$ can have only the trivial real-solution. Let $\alpha$ be such a solution. Since $G_{\partial}$ commutes with $\partial^*$, $\alpha$ is $\partial^*$-exact. This means $\alpha$ is orthogonal to harmonic forms by the Hodge theorem. By K\"ahler commutation relations

\begin{equation*}
\begin{aligned}
       D_k(\alpha) &= \frac{i}{n-2k+3} G_{\partial}(L \partial^* \sbpartial \alpha - i \bpartial \sbpartial \alpha) \\
       &= \frac{i}{n-2k+3} \left(-G_{\partial}(L \sbpartial \partial^* \alpha) - i \bpartial G_{\partial}(\sbpartial \alpha)\right) = \frac{1}{n-2k+3}  \bpartial G_{\partial}(\sbpartial \alpha)
\end{aligned}
\end{equation*}

\noindent since $\alpha$ is $\partial^*$-exact and $G_{\partial}$ commutes with $\bpartial$. It follows that $\alpha$ must be $\bpartial$-exact, and

$$\bpartial \alpha = \bpartial D_k(\alpha) = 0.$$

But since $\alpha$ is real, this implies $\partial \alpha =0$. So 

$$\Delta \alpha = (\partial \partial^* +\partial^* \partial) \alpha = 0. $$

Hence it follows that $\alpha = 0$, since it is also orthogonal to harmonic forms. 

\

Using this in \eqref{claim1.1-1} and $\sbpartial \psi_{m-1} =0$, we infer that $\psi_{m-2} = 0$. By induction, it follows that $\psi_{k} =0$ for all $1 \leq  k \leq m-2$. Clearly, if $\psi_1 = 0$, $f$ is a constant by \eqref{primitives12}. The second part of the Claim follows directly from  \eqref{cascade}.








\end{proof}

\






\begin{corollary}\label{lemma3}
    Let $u^1$ and $u^2$ be two real $(m-1,m-1)$ forms in $\Ker (\spartial) \cap \Ker( \bar{\partial}^*)$. Then assuming $\psi^1_{k} -\psi^2_{k}$ is $\sbpartial$-exact for all $1 \leq k \leq m-2$ and $\psi^1_{m-1} -\psi^2_{m-1}$ is $\sbpartial$-closed 
    we have 
    
    $$u^1(z) = u^2(z) + c \omega^{m-1} + (\psi^1_{m-1} -\psi^2_{m-1}).$$
    
\end{corollary}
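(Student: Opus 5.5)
The plan is to reduce the corollary to Claim \ref{harmonicity-claim} by linearity. Set $w = u^1 - u^2$. Since $\partial^*$ and $\bar{\partial}^*$ are linear, $w$ is again a real $(m-1,m-1)$ form in $\Ker(\spartial)\cap\Ker(\bar{\partial}^*)$. The Lefschetz decomposition is linear and unique, so the primitive coordinates of $w$ are $f^1 - f^2$ and $\psi^1_k - \psi^2_k$ for $1\le k\le m-1$. In particular the cascade relations \eqref{cascade}, \eqref{primitives12} and the representation \eqref{dbar-problem} all hold for the primitives of $w$, because they were derived only from the kernel condition.

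Next I would apply Claim \ref{harmonicity-claim} to $w$. By hypothesis, $\psi^1_k-\psi^2_k$ is $\bar\partial$-exact for $1\le k\le m-2$, and $\bar\partial^*(\psi^1_{m-1}-\psi^2_{m-1})=0$. Note that the claim is phrased with $\sbpartial\psi_{m-1}=0$, and this is what enters \eqref{claim1.1-1}. If the corollary's "$\sbpartial$-closed" is read as $\sbpartial(\psi^1_{m-1}-\psi^2_{m-1})=0$, the hypotheses match exactly. The claim then gives $\psi^1_k=\psi^2_k$ for $1\le k\le m-2$, and by \eqref{primitives12} with $\psi_1=0$ we get $\partial(f^1-f^2)=0$. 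Since $f^1-f^2$ is real, it is also $\bar\partial$-closed, hence $d$-closed, hence a constant $c$ on the compact connected manifold $M$.

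Finally I would reassemble the Lefschetz decomposition of $w$:
$$w=(f^1-f^2)\,\omega^{m-1}+\sum_{k=1}^{m-2}(\psi^1_k-\psi^2_k)\wedge\omega^{m-1-k}+(\psi^1_{m-1}-\psi^2_{m-1}).$$
The middle sum vanishes, the first term is $c\,\omega^{m-1}$, and the top primitive enters with $\omega^0$. This gives $u^1=u^2+c\,\omega^{m-1}+(\psi^1_{m-1}-\psi^2_{m-1})$, as stated.

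The main point needing care is the induction inside the claim for the difference form. At step $k$, \eqref{dbar-problem} expresses $\psi_{k-1}$ through $\sbpartial\psi_{k-1}$ and $\sbpartial\psi_k$ plus a $\partial$-closed $\gamma$. To kill $\gamma$ one needs the exactness hypothesis, which is why the corollary imposes it on the differences rather than on the individual $\psi^j_k$. I would verify that the uniqueness remark ("if $\alpha$ is real and $\sbpartial$-exact, $\gamma=0$") applies to each $\psi^1_{k-1}-\psi^2_{k-1}$. Once $\psi_k=0$ is known, the equation reduces to $D_{k}(\psi_{k-1})=\psi_{k-1}$, whose only real solution is zero by the argument in the claim. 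This descends from $k=m-1$ to $k=2$.
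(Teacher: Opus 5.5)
Your proposal is correct and follows the argument the paper leaves implicit. By linearity, $w=u^1-u^2$ is a real form in $\Ker(\partial^*)\cap\Ker(\bar\partial^*)$ whose primitive coordinates are the differences, so Claim~\ref{harmonicity-claim} applied to $w$ kills the intermediate primitives and makes $f^1-f^2$ constant. One slip to fix: in your second paragraph the hypothesis on $\psi^1_k-\psi^2_k$ is $\bar\partial^*$-exactness, not $\bar\partial$-exactness (you use it correctly in your last paragraph), and it is this condition, via real and $\bar\partial^*$-exact $\Rightarrow$ $\partial^*$-exact, that forces $\gamma=0$ in \eqref{dbar-problem}.
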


\

Define

\begin{equation}
Q(u) = \star_{\omega}(i \pbp u \wedge \omega_{n-m-1}).
\end{equation}

\

Then \cite[Cor.~3.4]{DP25} shows that

\begin{equation}\label{defineQ}
        Q(u)= \frac{1}{(m-1)!}\left[-i \pbp \Lambda^{m-1} u + (m-1) \Delta \Lambda^{m-2} u - (\Delta \Lambda^{m-1} u) \;\omega\right].
\end{equation}

\

So the Monge-Amp\`ere equation \eqref{MA-eqn} can now be written as 

\begin{equation}\label{QMA-eqn}
    \left[\star_{\omega}(\alpha\wedge \omega_{n-m-1}) + Q(u)\right]^n = dV.
\end{equation}

\

By taking the Trace of $u$ repeatedly it can be shown that 

\begin{equation}\label{trace-2}
    \Lambda^{m-2} u =\frac{(m-2)!(n-2)!}{(n-m)!} \psi_1 + \frac{(m-1)!(n-1)!}{(n-m+1)!} f \; \omega,
\end{equation}

\noindent and 

\begin{equation}\label{trace-1}
\Lambda^{m-1} u = \frac{(m-1)! n!}{(n-m+1)!}f.
\end{equation}

Using \cref{defineQ,trace-1,trace-2}, equation \eqref{QMA-eqn} can be written as

\begin{equation}\label{QMA-primitives}
    \Big[\star_{\omega}(\alpha\wedge \omega_{n-m-1}) + \frac{(n-2)!}{(n-m)!}\left(\Delta \psi_1 - c_{n,m}\left[(n-m+1)\Delta f \omega +  n i \pbp f \right]\right)\Big]^n = dV.
\end{equation}

\noindent where $c_{n,m} =\dfrac{(n-1)}{(n-m+1)}$.

\

We prove a uniqueness theorem for \eqref{MA-eqn}.

\begin{theorem}\label{unique-thm}
Let $u^1$ and $u^2$ in $C_{m-1,m-1}^{\infty}(M, \mathbb R)$ be two solutions to \eqref{MA-eqn} that lie in $\Ker (\spartial) \cap \Ker( \bar{\partial}^*)$. Then

\begin{enumerate}
    \item If the difference of all primitives of orders from $2$ to $(m-2)$ of $u^1$ and $u^2$ is $\bar\partial^*$-exact and $\sbpartial(\psi^1_{m-1} - \psi_{m-1}^2) =0 $, then

$$u^1(z) = u^2(z) + c \omega^{m-1} + \alpha \wedge \omega^{m-2} +  (\psi^1_{m-1} - \psi_{m-1}^2).$$

\noindent for some constant $c$, a harmonic $(1,1)$ form $\alpha$.

\

\item If the difference of all primitives of orders from $2$ to $(m-1)$ of $u^1$ and $u^2$ is $\bar\partial^*$-closed, then

$$u^1(z) = u^2(z) + c \omega^{m-1} + \sum_{k=1}^{m-2} \alpha_k \wedge \omega^{m-k-1} + (\psi^1_{m-1} - \psi_{m-1}^2).$$

\noindent for some constant $c$ and harmonic forms $\alpha_k \in C^{\infty}_{k,k}(M, \mathbb R) $.
\end{enumerate}

\end{theorem}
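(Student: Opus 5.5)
We would work with the difference $v = u^1 - u^2$, which is real, lies in $\Ker(\spartial)\cap\Ker(\sbpartial)$ by linearity, and has primitive coordinates $\psi^1_k-\psi^2_k$ (written $\psi_k$ below, with $\psi_0 = f^1-f^2$). The cascade \eqref{cascade} and the identity \eqref{primitives12} therefore hold for $v$. The hypotheses concern only the primitives of order at least $2$. So the extra input must come from the PDE itself: we need to show that the two lowest primitives $f$ and $\psi_1$ are forced up to a constant and a harmonic $(1,1)$ form.

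\textbf{Step 1 (PDE determines the low primitives).} By \eqref{QMA-primitives}, both solutions satisfy the equation
$$\big[X + Q(u^j)\big]^n = dV,\qquad X=\star_\omega(\alpha\wedge\omega_{n-m-1}),$$
where $Q(u^j)$ depends only on $f^j,\psi^j_1$. We eliminate $\psi_1$ using \eqref{primitives12}: $\sbpartial\psi_1 = -\tfrac{i(m-1)(n-1)}{n-m+1}\partial f$. Applying $\bpartial$ and using the Kähler identities (and realness), $\Delta\psi_1$ can be expressed through $i\pbp f$ and $\Delta f\,\omega$ up to a term of the form $\bpartial\sbpartial$ of something $\partial$-closed; concretely, $\Delta\psi_1 = \bpartial\sbpartial\psi_1 + \sbpartial\bpartial\psi_1$ with $\Delta=\Delta_{\bpartial}$. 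Here $\bpartial\sbpartial\psi_1$ is a multiple of $i\pbp f$, and $\sbpartial\bpartial\psi_1$ is controlled by the same relation for its conjugate. Then $X+Q(u^j) = X + \kappa_1 i\pbp f^j + \kappa_2\Delta f^j\,\omega$ for explicit constants $\kappa_1,\kappa_2$. This is an $(a,b)$-type equation in the scalar $f^j$ alone (the reduction of Section 3).

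\textbf{Step 2 (uniqueness for the scalar equation).} We subtract the two equations and write
$$0=\log\det(\tilde g_{f^1})-\log\det(\tilde g_{f^2}) = \int_0^1 \tilde g_t^{p\bar q}\,dt\,\big(\kappa_1 f_{p\bar q}+\kappa_2(\Delta f) g_{p\bar q}\big).$$
This is a linear second order operator in $f=f^1-f^2$ with no zeroth-order term. If it is elliptic (a sign condition of the type in Theorem \ref{ab-MAeqn-theorem}), the strong maximum principle gives $f=c$. Then \eqref{primitives12} gives $\sbpartial\psi_1=0$, and since $\psi_1$ is real, also $\spartial\psi_1=0$. From the equation, $\Delta\psi_1$ is determined by $f$, hence $\Delta\psi_1=0$, so $\psi_1=\alpha$ is harmonic. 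If ellipticity fails, we would instead attempt an integration-by-parts argument against $\omega^{n-1}$ to show $\Delta\psi_1 - (\text{terms in }f)$ vanishes. This is the step we expect to be the main obstacle: verifying that the specific coefficients from \eqref{QMA-primitives} yield an elliptic linearization, or otherwise extracting $f=\mathrm{const}$.

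\textbf{Step 3 (higher primitives).} For part (1), we run the induction of Claim \ref{harmonicity-claim} downward from $k=m-2$ to $k=2$, using \eqref{dbar-problem} and the fact that $D_k\alpha=\alpha$ has only the trivial real solution. This shows $\psi_k=0$ for $2\le k\le m-2$, with $\psi_{m-1}$ left free. Combined with Step 2, this gives $v = c\,\omega^{m-1} + \alpha\wedge\omega^{m-2} + \psi_{m-1}$. For part (2), the second half of Claim \ref{harmonicity-claim} shows, via \eqref{cascade} with $\sbpartial\psi_k=0$, that $\partial\psi_{k-1}=0$ for the relevant $k$. Hence each $\psi_k$ with $1\le k\le m-2$ is $\partial$- and $\sbpartial$-closed, real, and therefore harmonic. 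Adding $f=c$ from Step 2 yields the stated form.
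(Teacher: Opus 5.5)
Your outline matches the paper's: pass to $v=u^1-u^2$, linearize the difference of the two equations, get $f^1-f^2$ constant by the maximum principle, then run Claim \ref{harmonicity-claim} on the higher primitives. However, Step 1 is wrong, and it is exactly where the theorem's hypotheses have to enter. The identity \eqref{primitives12} determines $\sbpartial\psi_1$, and by reality $\spartial\psi_1$, but it says nothing about $\partial\psi_1$. So $\spartial\partial\psi_1=\overline{\sbpartial\bpartial\psi_1}$ is \emph{not} controlled ``by the same relation for its conjugate,'' and $Q(u^j)$ is not a function of $f^j$ alone. You also cannot invoke the reduction of Section \ref{section3}, because it uses the gauge condition \eqref{alt-third-condition}, which Theorem \ref{unique-thm} does not assume. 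The missing input is the $k=2$ case of \eqref{cascade} (for $m\ge 3$):
\[ \partial\psi_1=\frac{i}{n-1}\sbpartial\psi_1\wedge\omega+\frac{i(n-m)}{(n-3)(m-2)}\sbpartial\psi_2 . \]
Applying $\spartial$, the K\"ahler identities and \eqref{primitives12} give
\[ Q(u)=\frac{(n-2)!}{(n-m)!}\Big[\frac{n(m-n)}{n-m+1}\big(i\pbp f+\Delta f\,\omega\big)+\frac{i(n-m)}{(m-2)(n-3)}\spartial\sbpartial\psi_2\Big]. \]
So the remainder you set aside is a multiple of $\spartial\sbpartial\psi_2$. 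This relation also feeds the $f$-part, since $\spartial\partial\psi_1$ contains $\frac{m-1}{n-m+1}(i\pbp f+\Delta f\,\omega)$.

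Hence the hypothesis on $\psi_2$ is needed already in Step 2, not only in Step 3. In both parts of the theorem, $\sbpartial(\psi^1_2-\psi^2_2)=0$: $\sbpartial$-exact forms are $\sbpartial$-closed, for $m=3$ this is the assumption on $\psi_{m-1}$, and for $m=2$ there is no $\psi_2$. Therefore $Q(u^1)-Q(u^2)=Q(v)$ is a multiple of $i\pbp f+\Delta f\,\omega$ with $f=f^1-f^2$. Without this, $Q(v)\wedge\rho^{n-1}=0$ only equates the linear operator in $f$ with a multiple of $\Lambda_\rho(i\spartial\sbpartial\psi_2)$, and $f$ need not be constant. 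Your proposal never uses the $\psi_2$ hypothesis at this point, so $f=c$ is not established.

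With the correct formula, your ``main obstacle'' disappears, because the two coefficients coincide. Linearizing along the segment from $X+Q(u^2)$ to $X+Q(u^1)$ (equivalently, the paper's $\rho^{n-1}$ from the binomial expansion) yields the elliptic operator $\frac{n(m-n)}{n-m+1}\big(\mathrm{Tr}_\rho(\omega)\,\omega^{i\bj}+\rho^{i\bj}\big)\partial_i\partial_{\bj}$, and the maximum principle gives $f=c$. Harmonicity of $\psi_1$ then follows from the same $k=2$ relation: with $\sbpartial\psi_1=\sbpartial\psi_2=0$ it forces $\partial\psi_1=0$. Your Step 3 is fine.
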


\

\begin{proof}

We write $\Delta \psi_1$ in terms of $f$ and $\psi_2$ as follows. First apply $\spartial$ to \eqref{cascade} with $k=2$ to get (assuming $m\neq 2$)

\begin{equation}
    \begin{aligned}
        \spartial \partial \psi_{1} = \frac{1}{(n-1)} \left(\bpartial \sbpartial \psi_1 + i \spartial \sbpartial \psi_1 \; \omega\right) + i \frac{n-m}{(n-3)(m-2)} \spartial \sbpartial \psi_2 
    \end{aligned}
\end{equation}

It follows from \eqref{primitives12} that 

$$ \bpartial \sbpartial \psi_1 = -\frac{(m-1)(n-1)}{n-m+1} i\bpartial \partial f$$

\noindent and

$$i \spartial \sbpartial \psi_1 \; \omega = \frac{(m-1)(n-1)}{n-m+1} \Delta f \; \omega.$$

Combining the above equations we get

\begin{equation}
\begin{aligned}
    \Delta \psi_1 &= \partial \spartial \psi_1 + \spartial \partial \psi_1\\
    &= \frac{(m-1)n}{n-m+1} i \partial \bpartial f + \frac{m-1}{n-m+1} \Delta f \; \omega + \frac{i(n-m)}{(n-3)(m-2)} \spartial \sbpartial \psi_2.
\end{aligned}
\end{equation}

Plugging this into \eqref{defineQ} using \eqref{trace-2} and \eqref{trace-1} gives the following.

\begin{equation}\label{Q-fp1}
    Q(u) = \frac{(n-2)!}{(n-m)!} \left[ \frac{n(m-n)}{n-m+1} (\Delta f \omega +   i \partial \bar{\partial} f) + i \frac{n-m}{(m-2)(n-3)} \partial^* \bar{\partial}^* \psi_2 \right]
\end{equation}

For $m=2$, the term $\partial^* \bar{\partial}^* \psi_2$ will be absent in the above formula. Now the PDE is expressed in terms of $f$ and the primitive $\psi_2$. If $u_1$ and $u_2$ are two solutions, then $\Lambda_{\rho}(Q(u_1 - u_2)) = 0$ with $\rho >0$ is a $(1,1)$ form defined by

$$\rho^{n-1} = \sum_{s=1}^{n} \left[ \star_{\omega} \Big( (\alpha + i\partial\bar{\partial}u_1) \wedge \omega_{n-m-1} \Big) \right]^{n-s} \wedge \left[ \star_{\omega} \Big( (\alpha + i\partial\bar{\partial}u_2) \wedge \omega_{n-m-1} \Big) \right]^{s-1},$$

\noindent which follows from the binomial expansion of

\begin{equation}
    \begin{aligned}
         (\star_{\omega}\left((\alpha + i \pbp u_1)\wedge \omega_{n-m-1}\right))^n -  (\star_{\omega}\left((\alpha + i \pbp u_2)\wedge \omega_{n-m-1}\right))^n = 0.
    \end{aligned}
\end{equation}

 By \eqref{Q-fp1} this implies that

\begin{equation}\label{max-principle}
    \begin{aligned}
    0 &=\Lambda_{\rho}\left(\frac{n(m-n)}{n-m+1} \Delta (f_1 -f_2) \; \omega +   \frac{n(m-n)}{n-m+1} i \partial \bar{\partial} (f_1 - f_2)\right),
    \end{aligned}
\end{equation}

\noindent since $\psi^1_{2}-\psi^2_{2}$ is $\bpartial^*$-closed by assumption. Define the operator

$$\mathcal L v = \frac{n(m-n)}{n-m+1}\left( \mbox{Tr}_{\rho}(\omega)  \; \omega^{i \bj}  +  \rho^{i \bj}\right)v_{i \bj}.$$

This is an elliptic operator. By \eqref{max-principle} $\mathcal L(f_1 - f_2) = 0$ on a compact manifold. Hence the maximum principle implies $f_1 - f_2 = \text{constant}$.

Now Theorem \ref{unique-thm} follows from the proof of Claim \eqref{harmonicity-claim} combined with the fact that if $f_1 - f_2$ is constant, then $\spartial(\psi^1_1  - \psi^2_1) = 0$. From equation \eqref{cascade}, we also get that $\partial(\psi^1_1  - \psi^2_1) = 0$. Hence $\psi^1_1  - \psi^2_1$ is harmonic.

\end{proof}






{\flushleft{\bf The Laplacian Trace condition:} }

\bigskip

In addition to above, a third condition is introduced in \cite{DP25}:

\

\begin{equation}\label{MA-eqn2}
\begin{aligned}
    &\left[\star_{\omega}\left((\alpha + i \pbp u)\wedge \omega_{n-m-1}\right)\right]^n = dV\\
\end{aligned}
\end{equation}

\noindent with $u$ satisfying 

\begin{equation}\label{generalIC}
    \alpha + i \pbp u >0, \quad u \in \Ker (\spartial) \cap \Ker( \bar{\partial}^*), \quad \text{ and }\Lambda^{m-2} \Delta u =0.
\end{equation}

\

\ This third condition is too restrictive since along with \eqref{trace-2} it implies

\begin{equation}
    \frac{(m-2)!(n-2)!}{(n-m)!} \Delta \psi_1 + \frac{(m-1)!(n-1)!}{(n-m+1)!} \Delta f \omega = 0.
\end{equation}

\

Here we used that $\Delta$ commutes with $\Lambda$. This commutation also implies that $\Delta \psi_1$ and $\Delta f$ are both primitives. So by the uniqueness of Lefschetz decomposition, $\Delta \psi_1 = \Delta f =0$. This means that $f$ is constant. So now equation \eqref{QMA-primitives} reads

$$ [\star_{\omega}(\alpha\wedge \omega_{n-m-1})]^n = dV,$$

\

\noindent which cannot hold for arbitrary $dV$. In view of this, we introduce an alternate gauge-fixing in the next section.











\section{$(a,b)$ Monge-Amp\`ere-type equation}\label{section3}

\bigskip

 Fix any two real constants $(a,b)$ satisfying $a+ nb = m-n-2$. Consider the equation

\begin{equation} \label{(n-1)-MAeqn}
    \begin{aligned}
        \left[\star_{\omega}(\alpha + i\pbp u) \wedge \omega_{n-m-1}\right]^n = dV
    \end{aligned}
\end{equation}

\

\noindent with conditions $\alpha + i\pbp u >0$, $u \in \Ker (\spartial) \cap \Ker( \bar{\partial}^*)$, and 

\

\begin{equation}\label{alt-third-condition}
     (m-1)\Lambda^{m-2} \Delta u = (1+a) i \pbp \Lambda^{m-1}  u +  (1+b) \Lambda^{m-1} \Delta u \;\omega.
\end{equation}

\

Taking $\Lambda$ on both sides of \eqref{alt-third-condition} shows that $a+nb = m-n-2$ is necessary. Under \eqref{alt-third-condition}, equation \eqref{(n-1)-MAeqn} simplifies to 

\begin{equation}\label{ab-MAeqn}
    \left(\star_{\omega}(\alpha \wedge \omega_{n-m-1}) + \frac{n!}{(n-m+1)!} \left[ a i\pbp f + b \Delta f \omega\right] \right)^n = dV.
    \end{equation}

Indeed, $Q(u)$ now becomes

\begin{equation}
    \begin{aligned}
        Q(u) &= \frac{1}{(m-1)!}\left(a  i \pbp \Lambda^{m-1}  u +  b \Lambda^{m-1} \Delta u \;\omega\right)\\
    \end{aligned}
\end{equation}

 Now using \eqref{trace-1} we get \eqref{ab-MAeqn}. We will call \eqref{ab-MAeqn} the {\em $(a,b)$ Monge-Amp\`ere-type equation}.

\

We show that the application to Demailly's inequality holds, using the $(a,b)$ Monge-Amp\`ere-type equation for $b=0$ and $a= m-n-2$. We adapt the analytic technique developed in \cite{DP25} and \cite{Popovici16}.

\begin{theorem}\label{demailly-thm}
    Let $m \in \{1, \hdots, n-1\}$. Suppose $\alpha, \beta \in C^{\infty}_{m,m}(M, \mathbb R)$ are such that

    \begin{enumerate}
        \item $d\alpha =0 $, $\alpha >0$ and $\Delta \alpha_{\omega} =0$.

        \ 

        \item $\beta \geq C \omega_m$ for some constant $C>0$ and $\pbp \beta =0$.

        \ 

        \item $\dfrac{1}{n!}\int_M \alpha_{\omega}^n - \dfrac{1}{(n-m)!}\int_M\alpha_{\omega}^{n-m}\wedge \beta >0$
    \end{enumerate}

    \

    Then there exists a real $(m,m)$ current $T$ such that 

    $$T \geq \delta (\alpha_{\omega})_m \text{ for some constant } \delta >0,\hspace{0.5cm} \pbp T = 0, \text{ and } \hspace{0.5cm} T \in [(\alpha_{\omega})_m - \beta]_A.$$

    If in addition, $d \beta = 0$, $T$ can be found such that $dT = 0$ and $T$ lies in $[(\alpha_{\omega})_m - \beta]_{BC}$. Here $[.]_{BC}$ stands for the Bott-Chern cohomology and $[.]_A$ stands for the Aeppli cohomology class.
\end{theorem}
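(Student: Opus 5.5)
We follow the duality (Hahn--Banach) strategy of \cite{Popovici16} and \cite{DP25}. Set $\gamma=(\alpha_\omega)_m-\beta$. By Lamari-type duality, $[\gamma]_A$ contains a current $T\ge\delta(\alpha_\omega)_m$ with $\pbp T=0$ exactly when
\begin{equation}
\int_M \gamma\wedge G \;\ge\; \delta\int_M (\alpha_\omega)_m\wedge G
\end{equation}
for every strongly semi-positive $d$-closed $(n-m,n-m)$-form $G$. In the Bott--Chern case ($d\beta=0$) one instead tests against $\pbp$-closed $G$ and gets $dT=0$. We argue by contradiction. Suppose no $\delta>0$ works. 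Then we get normalized test forms $G_\varepsilon\ge0$, $\int_M(\alpha_\omega)_m\wedge G_\varepsilon=1$, with $\int_M\gamma\wedge G_\varepsilon\le\varepsilon$. We regularize $G_\varepsilon$ to smooth strictly positive forms, keeping closedness up to an error that vanishes as $\varepsilon\to0$.

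For each $\varepsilon$, we use Theorem \ref{exist-theorem}, which for $b=0$, $a=m-n-2$ satisfies condition \ref{a} since $a+b\le0$ and $b\le0. This gives $u_\varepsilon$ with $\alpha+i\pbp u_\varepsilon>0$, the kernel and gauge conditions, and
\begin{equation}
\Big[\star_\omega\big((\alpha+i\pbp u_\varepsilon)\wedge\omega_{n-m-1}\big)\Big]^n=c_\varepsilon\,\frac{\alpha_\omega^{m}\wedge G_\varepsilon\cdot\alpha_\omega^{n-m}}{\ldots}.
\end{equation}
The right-hand side is chosen, as in \cite{Popovici16}, so that $\tilde\alpha_\varepsilon:=(\alpha+i\pbp u_\varepsilon)_\omega$ satisfies $\tilde\alpha_\varepsilon^n = c_\varepsilon\, (\tilde\alpha_\varepsilon)_m\wedge G_\varepsilon\cdot$(normalizing volume). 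Concretely, we take $dV_\varepsilon=c_\varepsilon\,\alpha_\omega^{m}\wedge G_\varepsilon$ renormalized to mass $\int\alpha_\omega^n$. By \eqref{ab-MAeqn} with $b=0$, $\tilde\alpha_\varepsilon=\alpha_\omega+C\,i\pbp f_\varepsilon$ lies in the Bott--Chern class of $\alpha_\omega$. So the integrals $\int\tilde\alpha_\varepsilon^n$ and $\int\tilde\alpha_\varepsilon^{n-m}\wedge\beta$ equal $\int\alpha_\omega^n$ and $\int\alpha_\omega^{n-m}\wedge\beta$ respectively, because $\pbp\beta=0$ and $\Delta\alpha_\omega=0$ make $\alpha_\omega$ closed. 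This cohomological invariance is exactly where $b=0$ is needed: the term $b\Delta f\,\omega$ would break it.

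Next we run the pointwise inequality of \cite{Popovici16}. For positive $(1,1)$-forms, $(\tilde\alpha_\varepsilon)^{n-m}\wedge\beta$ times $(\tilde\alpha_\varepsilon)_m\wedge G_\varepsilon$ dominates a multiple of $\tilde\alpha_\varepsilon^n\cdot\beta\wedge G_\varepsilon$. This is the multilinear Cauchy--Schwarz/Lelong-type inequality, $\frac{\tilde\alpha^{n-m}\wedge\beta}{\tilde\alpha^n}\cdot\frac{\tilde\alpha^m\wedge G}{\tilde\alpha^n}\ge c\,\frac{\beta\wedge G}{\tilde\alpha^n}$, valid for strongly positive $\beta$ and $G$. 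Integrating with the Monge--Amp\`ere equation and Cauchy--Schwarz yields
\begin{equation}
\int_M\beta\wedge G_\varepsilon\le \frac{n!}{(n-m)!}\frac{\int\alpha_\omega^{n-m}\wedge\beta}{\int\alpha_\omega^n}\int_M(\alpha_\omega)_m\wedge G_\varepsilon+o(1).
\end{equation}
Combined with hypothesis (3), this gives $\int\gamma\wedge G_\varepsilon\ge\delta_0>0$ uniformly, which contradicts $\int\gamma\wedge G_\varepsilon\le\varepsilon$.

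The main obstacle is the regularization step and the uniform control of $c_\varepsilon$. The $G_\varepsilon$ are only currents, and the smoothing must preserve $\pbp$- or $d$-closedness up to errors controlled by $\beta\ge C\omega_m$. We will handle this with Demailly's regularization as in \cite{Popovici16}, absorbing the loss of positivity into $\varepsilon\omega_{n-m}$. The constant $c_\varepsilon$ is fixed by integrating the equation against the invariant total mass.
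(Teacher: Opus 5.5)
\textbf{Gap: the right-hand side of the Monge--Amp\`ere equation.} This is the one step of the argument with real content, and your choice does not work. Write $\tilde\alpha=(\alpha+i\pbp u)_\omega$ and $\tilde\alpha_k=\tilde\alpha^k/k!$. The pointwise inequality then reads $F_1F_2\ge F_3$ with
\[
F_1=\frac{\tilde\alpha_{n-m}\wedge\beta}{\tilde\alpha_n},\qquad F_2=\frac{\tilde\alpha_m\wedge G}{\tilde\alpha_n},\qquad F_3=\frac{\beta\wedge G}{\tilde\alpha_n}.
\]
This becomes an inequality between integrals only once one of the three functions is constant. If $F_3$ is constant, take square roots and apply Cauchy--Schwarz; if $F_1$ or $F_2$ is, integrate directly.

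With your concrete choice $dV_\varepsilon=c_\varepsilon\,\alpha_\omega^m\wedge G_\varepsilon$, none of them is constant, because $\tilde\alpha\neq\alpha_\omega$ pointwise. Then $\int_M F_1F_2\,\tilde\alpha_n$ does not factor, so the bound you display for $\int_M\beta\wedge G_\varepsilon$ does not follow. Your intermediate description $\tilde\alpha^n\propto\tilde\alpha_m\wedge G$ would make $F_2$ constant. However, it has the unknown on both sides: it is a Hessian-quotient-type equation, not one that Theorem \ref{exist-theorem} solves.

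The choice that works, used in the paper as in \cite{Popovici16}, is
\[
\tilde\alpha_n=c\,\beta\wedge G,\qquad c=\frac{\int_M(\alpha_\omega)_n}{\int_M\beta\wedge G}.
\]
This is a smooth positive volume form, since $\beta\ge C\omega_m$ and $G>0$. It makes $F_3\equiv 1/c$, and the square-root/Cauchy--Schwarz step gives
\[
\int_M\tilde\alpha_n\cdot\int_M\beta\wedge G\le\int_M\tilde\alpha_{n-m}\wedge\beta\cdot\int_M\tilde\alpha_m\wedge G .
\]
Your $b=0$ invariance argument, which uses $\pbp G=0$ and $\pbp\beta=0$, then replaces $\tilde\alpha$ by $\alpha_\omega$ throughout. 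Hypothesis (3) yields $\delta=1-\int_M(\alpha_\omega)_{n-m}\wedge\beta\big/\int_M(\alpha_\omega)_n$ directly, with no contradiction argument needed.

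\textbf{The obstacle you single out is not one.} The functionals produced by Hahn--Banach are smooth forms, not currents. If $G$ is only semi-positive, replace it by $G+\eta\,\omega_{n-m}$, which is still exactly $d$-closed because $\omega$ is K\"ahler, and let $\eta\to0$ in the final linear inequality. So no regularization, no closedness error and no $o(1)$ term arise. Nor is uniform control of $c$ needed, since $c$ is fixed by the invariant total mass.

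\textbf{A smaller point on positivity.} Testing against strongly semi-positive $G$ only gives weak positivity of $T-\delta(\alpha_\omega)_m$. To get it in the strong sense of the paper's notation, test against weakly positive $G$, as the paper does. The pointwise inequality still holds for such $G$ because $\beta$ is strongly positive.
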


\begin{proof}

By Lamari-type duality lemma \cite{Lamari99, DP25}, to prove that there exists a real current $S$ and a constant $\delta>0$ such that $T = (\alpha_{\omega})_{m} - \beta + i \pbp S$ satisfies the Theorem, it is enough to show that there exists a constant $\delta>0$ such that 

$$(1-\delta)\int_{M}(\alpha_{\omega})_{m}\wedge\Omega\geq\int_{M}\beta\wedge\Omega,$$

  \noindent for any $\Omega \in C^{\infty}_{n-m,n-m}(M, \mathbb R)$ satisfying $\pbp \Omega = 0$ and $\Omega>0$ weakly. Assume for contradiction that there is a sequence of forms $\Omega_k$ and $\delta_k \to 0$ such that

  \begin{equation}\label{contra-sequence}
       (1-\delta_k)\int_{M}(\alpha_{\omega})_{m}\wedge\Omega_k<\int_{M}\beta\wedge\Omega_k.
  \end{equation}
 
  By assumption $2$, it is possible to normalize the sequence $\Omega_k$ so that $\int_{M}\beta\wedge\Omega_k = 1$ for each $k$. This also implies $$\int_M \omega_m \wedge \Omega_k \le \frac{1}{C},$$

  \noindent so that $\Omega_k \to \Omega_{\infty}$ weakly. Taking limits of \eqref{contra-sequence} gives
  
  $$\lim_{k \to \infty} \int_{M} (\alpha_{\omega})_{m} \wedge \Omega_{k} = \int_{M} (\alpha_{\omega})_{m} \wedge \Omega_{\infty} \leq 1.$$
  
  Let $u_k$ be a solution of 

  \begin{equation}
      \frac{1}{n!} \left[ \star_\omega \Big( (\alpha + i\partial\bar{\partial}u_k) \wedge \omega_{n-m-1} \Big) \right]^n = c_k \beta \wedge \Omega_k
  \end{equation}

  \noindent such that 

  \begin{equation}
      \tilde{\alpha}_k = \alpha + i\partial\bar{\partial}u_k > 0 \quad \text{and} \quad u_k \in \Ker(\partial^*) \cap \Ker(\bar{\partial}^*),
  \end{equation}

  \noindent and

  \begin{equation}\label{generalIC1}
      (m-1)\Lambda^{m-2} \Delta u_k = (m-n-1)i \pbp \Lambda^{m-1}  u_k +  \Lambda^{m-1} \Delta u_k \;\omega.
  \end{equation}   

  \

  \noindent which exists by Theorem \ref{exist-theorem}.

  \

  By H\"older's inequality and the pointwise inequality $\dfrac{\rho_m \wedge \Omega}{\omega_n} \cdot \dfrac{\rho_{n-m} \wedge \beta}{\rho_n} \geq \dfrac{\beta \wedge \Omega}{\omega_n}$ for any $(1,1)$ form $\rho>0$ (See \cite[p.~25]{DP25})

$$\begin{aligned}
&\left( \int_M ((\tilde{\alpha}_k)_\omega)_m \wedge \Omega_k \right) \cdot \left( \int_M ((\tilde{\alpha}_k)_\omega)_{n-m} \wedge \beta \right) \geq c_k \left( \int_M \beta \wedge \Omega_k \right)^2 = c_k.
\end{aligned}$$

Note that $c_k = \int_M ((\tilde{\alpha}_k)_\omega)_n $. By \eqref{generalIC1}

$$\begin{aligned}
(\tilde{\alpha}_k)_\omega &= \star_{\omega}\left(\alpha \wedge \omega_{n-m-1}\right) + Q(u_k) \\
&= \alpha_\omega  + \frac{m-n-2}{(m-1)!}\left( i \pbp \Lambda^{m-1}  u_k \right).
\end{aligned}$$

\

It follows that for any $p \in \{1,\hdots, n\}$

$$((\tilde{\alpha}_k)_\omega)_p - (\alpha_\omega)_p = i\pbp \Lambda^{m-1} u_k \wedge \Gamma ,$$

\

\noindent for a $d$-closed form $\Gamma$. Note that $\alpha_{\omega}$ is harmonic and hence $d$-closed. Now applications of the Stokes' theorem using $\pbp \Omega_k=0$ and $\pbp \beta = 0$ gives

$$ \int_M ((\tilde{\alpha}_k)_\omega)_m \wedge \Omega_k = \int_M (\alpha_\omega)_m \wedge \Omega_k,$$

$$ \int_M ((\tilde{\alpha}_k)_\omega)_{n-m} \wedge \beta =  \int_M (\alpha_\omega)_{n-m} \wedge \beta , $$

\noindent and

$$\int_M ((\tilde{\alpha}_k)_\omega)_n =\int_M ({\alpha}_\omega)_n .$$

\

Combining the above equations with $\int_{M} (\alpha_{\omega})_{m} \wedge \Omega_{\infty} \leq 1$, we have

$$\int_{M} (\alpha_{\omega})_{n-m} \wedge \beta \geq \int_{M} (\alpha_{\omega})_{n}$$

\

\noindent contradicting the assumption $3$.

\end{proof}

\section{\emph{ A priori} estimates for $(a,b)$ Monge-Amp\`ere-type equation}

\bigskip

In this section, we give a complete description of the parametric regimes for $(a,b)$ in which the $(a,b)$ Monge-Amp\`ere-type equation is either solvable, or conditionally solvable based on the background data. Also refer to \cite{GQY19,STW17,TW17} for fully-nonlinear equations of similar type. 

\begin{itemize}
    \item {\bf Regime $1$}: $a+b\geq 0$ and $b \geq 0$, \textbf{or}  $a+b \leq 0$ and $b \leq 0$.

In this range, the equation \eqref{ab-MAeqn} is unconditionally solvable. It is enough to consider the case when $a+b\geq 0$ and $b \geq 0$, since the other region is covered by the substitution $f \to -f$.

For any $a,b$ such that $a+b >0$ and $b > 0$, we derive independent second-order and gradient estimates for \eqref{ab-MAeqn}. The case when $a+b=0$ is the Monge-Amp\`ere equation for $(n-1)$-plurisubharmonic functions which has been dealt with in \cite{George25, TW17}. The case when $b = 0$ reduces to the complex Monge-Amp\`ere equation which has been treated extensively in the literature. 

\

We consider the equation

\begin{equation}\label{abMA-eqn+}
    \begin{cases}
        &\log \det( X + a i \pbp f + b \Delta f \omega) = \psi,\\
        &X + a i \pbp f + b \Delta f \omega >0
    \end{cases}
\end{equation}

\noindent where $X$ is a smooth, positive $(1,1)$ form. Denote $\tilde g =  X + a i \pbp f + b \Delta f \omega $. The symbol of the differential operator is given in orthonormal coordinates for $\{g_{i \bj}\}$ by 

\begin{equation}\label{elliptic+}
    \begin{aligned}
        G^{i \bj} = a\tilde{g}^{i \bj} + b \delta_{i j} \sum_k \tilde g^{k \bk}.
    \end{aligned}
\end{equation}

\noindent where $\tilde g^{i \bj}$ denotes the inverse of $\tilde g_{i \bj}$.

\begin{theorem}
   Any solution $f \in C^4(M)$ with $|\nabla f|_{\omega} \leq C$ to equation \eqref{ab-MAeqn} satisfies

   $$|i\pbp f|_{\omega} \leq C,$$

   \noindent for a constant $C$ that depends only on the background data.
\end{theorem}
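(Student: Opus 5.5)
Throughout, $a+b>0$ and $b>0$, so the symbol $G^{i\bj}$ in \eqref{elliptic+} is positive definite. The plan is to bound the largest eigenvalue of $\tilde g$. A lower bound on $i\pbp f$ (or on $\Delta f$) is not enough, because the equation couples $i\pbp f$ with $\Delta f\,\omega$. I first note the reduction. Tracing $\tilde g$ gives
\[
\mathrm{tr}_\omega \tilde g=\mathrm{tr}_\omega X+(a+nb)\Delta f .
\]
If $a+nb\neq 0$, then for $\tilde g>0$ with $\det\tilde g=e^\psi$, an upper bound on the largest eigenvalue $\lambda_1$ of $\tilde g$ bounds all eigenvalues above and below. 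This bounds $\Delta f$, and then $a\,i\pbp f=\tilde g-X-b\Delta f\,\omega$ bounds $i\pbp f$ (using $a\neq 0$, or $b\neq 0$ directly in the degenerate cases). So the goal is $\lambda_1\le C$.

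I will apply the maximum principle to
\[
H=\log\lambda_1(\tilde g)+\varphi(|\nabla f|^2_\omega)+\phi(f),
\]
where $\varphi(s)=-\tfrac12\log(1-s/2K)$ with $K=\sup|\nabla f|^2+1$, and $\phi$ is a convex function chosen as in Hou--Ma--Wu and Tosatti--Weinkove (for instance $\phi(f)=-Af$ after normalizing $\sup f=0$, or $e^{-Af}$). Since $\lambda_1$ may fail to be smooth, I perturb $\tilde g$ at the maximum point in the standard way. At the maximum I use coordinates that are $\omega$-orthonormal and diagonalize $\tilde g$, with $\lambda_1\ge\lambda_2\ge\cdots$. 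Then $G^{i\bj}=(a/\lambda_i+b\sum_k 1/\lambda_k)\delta_{ij}$.

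The steps are as follows. First, I differentiate the equation once and twice in $\partial_1,\bar\partial_1$:
\[
G^{i\bj}\nabla_1\nabla_{\bar1}\tilde g_{i\bj}=\tilde g^{i\bar q}\tilde g^{p\bj}\nabla_1\tilde g_{p\bj}\nabla_{\bar1}\tilde g_{i\bar q}+\psi_{1\bar1}.
\]
Commuting covariant derivatives, $G^{i\bj}\nabla_i\nabla_{\bj}\tilde g_{1\bar1}$ then equals this plus curvature terms bounded by $C\lambda_1\sum_i G^{i\bi}$. This uses $\sum_i G^{i\bi}=(a+nb)\sum 1/\lambda_k$, which is comparable to $\sum 1/\lambda_k\ge c$. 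Second, I use the third-order terms. The concavity terms $\tilde g^{i\bi}\tilde g^{j\bj}|\nabla_1\tilde g_{i\bj}|^2$, together with the eigenvalue perturbation terms $\sum_{p>1}\frac{G^{i\bi}|\nabla_i\tilde g_{p\bar1}|^2}{\lambda_1(\lambda_1-\lambda_p)}$, must absorb $G^{i\bi}|\nabla_i\tilde g_{1\bar1}|^2/\lambda_1^2$. Third, the gradient term $\varphi$ contributes $\varphi' G^{i\bi}(|f_{ij}|^2+|f_{i\bj}|^2)$. Since $|f_{i\bi}|\gtrsim\lambda_i/|a|$ up to the trace part, this gives a good term of order $\sum_i G^{i\bi}\lambda_i^2$, which controls the curvature errors. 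Fourth, $\phi$ produces $-\phi' G^{i\bj}f_{i\bj}$. Because $G^{i\bj}f_{i\bj}$ is a combination of $n$ and $\mathrm{tr}$ terms involving $X$, this yields $\epsilon\sum_i G^{i\bi}-C$, which is the usual "$\theta\sum G^{i\bi}$" good term.

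The main obstacle is the third-order step. $G$ is not simply $\tilde g^{-1}$, so the identity $\nabla_i\tilde g_{1\bar1}=a\nabla_1\tilde g_{i\bar1}+(\text{torsion-free commutation})+b\,\nabla_i\Delta f\,\delta_{11}$ mixes the terms. The term $b\nabla_i\Delta f$ does not appear in the concavity term of $\log\det$. To handle it, I rewrite $\nabla_i\Delta f$ through the trace identity as $\nabla_i\mathrm{tr}_\omega\tilde g/(a+nb)$, up to lower order. Then I split into the index sets $\{i:\lambda_i\ge\delta\lambda_1\}$ and $\{i:\lambda_i<\delta\lambda_1\}$, as in Tosatti--Weinkove's treatment of the $(n-1)$-PSH equation. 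For large eigenvalues, the first derivative condition $dH=0$ converts $\nabla_i\tilde g_{1\bar1}/\lambda_1$ into $\varphi'$ and $\phi'$ terms, which are absorbed using $\phi''$ or $(\varphi')^2$. For small eigenvalues, the $b\sum 1/\lambda_k$ part of $G^{i\bi}$ is the dominant part of $G^{1\bar1}$, and the concavity terms $\tilde g^{i\bi}\tilde g^{1\bar1}|\nabla_1\tilde g_{i\bar1}|^2$ cover the remainder. Once these absorptions work, one obtains at the maximum $0\ge \epsilon\sum G^{i\bi}+c\,\varphi'\lambda_1^2 G^{1\bar1}-C$. Since $G^{1\bar1}\ge b\sum 1/\lambda_k\ge c$, this gives $\lambda_1\le C$, and the reduction above completes the bound on $|i\pbp f|_\omega$.
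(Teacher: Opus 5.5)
Your overall framework is the paper's: a maximum principle for the logarithm of a top eigenvalue plus $\varphi(|\nabla f|^2)$, with the good term $\varphi' G^{1\bar 1}\lambda_1^2$ made effective by $G^{1\bar 1}\geq \min\{a+b,b\}\sum_k \tilde g^{k\bar k}$. However, the step you single out as the main obstacle does not go through as described.

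First, your differentiated identity is misstated. Differentiating $\log\det \tilde g=\psi$ twice gives $G^{i\bar j}\nabla_1\nabla_{\bar 1} f_{i\bar j}=\psi_{1\bar 1}-\tilde g^{i\bar j}\nabla_1\nabla_{\bar 1}X_{i\bar j}+(\text{concavity})$, with $G$ contracted against $f_{i\bar j}$, not $\tilde g_{i\bar j}$. Consequently $G^{i\bar j}\nabla_i\nabla_{\bar j}\tilde g_{1\bar 1}$ is $a$ times the $1\bar 1$-differentiated equation plus $b$ times the sum over $k$ of the $k\bar k$-differentiated ones. When $a<0$, its concavity part is nonnegative only because $b>|a|$.

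Second, and more seriously, the small-eigenvalue case does not close. After dividing by $\lambda_1$, the concavity terms containing $\nabla_1\tilde g_{i\bar 1}$ carry total weight $(a+b)\lambda_i^{-1}\lambda_1^{-2}$. The term you must absorb, $|\nabla_i\tilde g_{1\bar 1}|^2$, carries weight $G^{i\bar i}\lambda_1^{-2}=(a\lambda_i^{-1}+b\sum_k\lambda_k^{-1})\lambda_1^{-2}$. The first weight is strictly smaller, and much smaller when $\lambda_i\gg\lambda_n$. Moreover, as you note yourself, $\nabla_i\tilde g_{1\bar 1}-\nabla_1\tilde g_{i\bar 1}$ contains $b\nabla_i\Delta f$, which these terms do not see. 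So the claim that ``concavity covers the remainder'' is not justified.

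The missing observation is that neither concavity nor an index split is needed. The paper takes $\theta_1$ to be the largest eigenvalue of $i\pbp f$ itself. The linearized operator then lands directly on $f_{1\bar 1}$, and the concavity term has a favorable sign and is simply discarded. It uses $P=\log\theta_1+\varphi(|\nabla f|^2)$ with $\varphi''=(\varphi')^2$ and no $\phi(f)$.

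At the maximum, $\nabla_i\theta_1/\theta_1=-\varphi'\nabla_i|\nabla f|^2$ holds for \emph{every} $i$. Hence $-G^{i\bar j}\nabla_i\theta_1\nabla_{\bar j}\theta_1/\theta_1^2$ cancels exactly against $\varphi'' G^{i\bar j}\nabla_i|\nabla f|^2\nabla_{\bar j}|\nabla f|^2$. Every remaining error is $O(1+\sum_k\tilde g^{k\bar k})$, and it is beaten by $\min\{a+b,b\}\,\varphi'\theta_1^2\sum_k\tilde g^{k\bar k}$ once $\theta_1$ is large.

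Your own setup can be repaired the same way. Apply $dH=0$ to all indices, not just the large ones. The $\phi'$ cross term is then at most $CA^2\sum_i G^{i\bar i}\leq C' G^{1\bar 1}$, which is absorbed by $c\,\varphi'\lambda_1^2 G^{1\bar 1}$. So $\phi$ and the splitting are superfluous.

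Working with the Hessian eigenvalue also avoids a hole in your reduction at $a=0$, which is allowed in this regime. There, bounds on $\tilde g=X+b\Delta f\,\omega$ control only $\Delta f$, not $i\pbp f$ pointwise.
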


\begin{proof}

The largest eigenvalue $\theta_1$ of $i \pbp f$ can be estimated by applying maximum principle to the following test function \cite{TW17,Szekelyhidi18,STW17}. Let $P(f) = \log(\theta_1) + \varphi(|\nabla f|^2)$, where 

$$\varphi(t) = -\log\left(1 - \frac{t}{2(1+ \sup|\nabla f|^2)}\right).$$

\ 

Consider a local holomorphic coordinate system that diagonalizes the Hessian at the maximum point $z_0$ of $P(f)$ such that $g_{i \bj}(z_0) = \delta_{i j}$ and $dg_{i \bj}(z_0) = 0$. So at $z_0$, $f_{i \bj} = \theta_i \delta_{ij}$. We prove by contradiction that $\theta_1 \leq C \sup( 1+ |\nabla f|^2)$. Differentiating the PDE by $\nabla_1 \nabla_{\bar 1}$,

\begin{equation}
    \begin{aligned}
        \tilde g^{i \bj}  \nabla_{1} \nabla_{\bar 1} X_{i \bj} + G^{i \bj} \nabla_{1} \nabla_{\bar 1} f_{i \bj} - \tilde g^{i \bl}\tilde g^{k \bj} \nabla_{\bar 1} \tilde{g}_{i \bj} \nabla_1 \tilde g_{k \bl} = \psi_{1 \bar 1},
    \end{aligned}
\end{equation}

\

\noindent and using the curvature formula

$$\nabla_{1} \nabla_{\bar 1} f_{i \bj} =\nabla_i \nabla_{\bar j} f_{1 \bar 1} + \sum_p (R_{1 \bar 1 i \bar p} f_{p \bar j} - R_{1 \bar j i \bar p} f_{p \bar 1}),$$

\noindent gives 

\begin{equation}
    \begin{aligned}
          G^{i \bj}\nabla_i \nabla_{\bar j} f_{1 \bar 1} =& \; \psi_{1 \bar 1}-G^{i \bj} \sum_p (R_{1 \bar 1 i \bar p} f_{p \bar j} - R_{1 \bar j i \bar p} f_{p \bar 1})  + \tilde g^{i \bl}\tilde g^{k \bj} \nabla_{\bar 1} \tilde g_{i \bj} \nabla_1 \tilde g_{k \bl}\\
          & -  \tilde g^{i \bj}  \nabla_{1} \nabla_{\bar 1} X_{i \bj}.
    \end{aligned}
\end{equation}

The eigenvalue derivative formulae are given by

    $$\nabla_i \theta_1 = \nabla_i f_{1\bar{1}},$$

   $$\nabla_i \nabla_{\bar{j}} \theta_1 = \nabla_i \nabla_{\bar{j}} f_{1\bar{1}} + \sum_{p > 1} \frac{\nabla_i f_{1\bar{p}} \nabla_{\bar{j}} f_{p\bar{1}}}{\theta_1 - \theta_p}.$$

So we have

\begin{equation}
    \begin{aligned}
        G^{i \bj} \nabla_i \nabla_{\bj} P(f) &\geq \frac{1}{\theta_1} \left(\psi_{1 \bar 1}-G^{i \bj} \sum_p (R_{1 \bar 1 i \bar p} f_{p \bar j} - R_{1 \bar j i \bar p} f_{p \bar 1}) + \tilde g^{i \bl}\tilde g^{k \bj} \nabla_{\bar 1} \tilde g_{i \bj} \nabla_1 \tilde g_{k \bl}\right) \\
        & \;\;\;- G^{i \bj} \frac{\nabla_{i} \theta_{1} \nabla_{\bj} \theta_{1}}{\theta_1^2} + \varphi' G^{i \bi} \theta_i^2 + \varphi' G^{i \bj} \sum_k\nabla_i f_k \nabla_{\bj} f_{\bk} \\
        & \;\;\;+ \varphi''G^{i \bj} \nabla_i|\nabla f|^2 \nabla_{\bj} |\nabla f|^2 -   \frac{1}{\theta_1}\tilde g^{i \bj}  \nabla_{1} \nabla_{\bar 1} X_{i \bj}+ \sum_{p > 1} G^{i \bj}\frac{\nabla_i f_{1\bar{p}} \nabla_{\bar{j}} f_{p\bar{1}}}{\theta_1(\theta_1 - \theta_p)} \\
        &\;\;\;- C|\nabla f|(1 + \sum \tilde g^{i \bi})\\
        &\geq -C \sum_i \tilde g^{i \bi} + \min\{(a+b),b\}\;\varphi'\sum_k \tilde g^{k \bk} \theta_1^2 \\
        &\;\;\; + (\varphi'' - (\varphi')^2)G^{i \bj} \nabla_i|\nabla f|^2 \nabla_{\bj} |\nabla f|^2 \\
        &>0.
    \end{aligned}
\end{equation}

In the final inequality we used $\varphi'(|\nabla f|^2) \geq \dfrac{1}{\theta_1}$, 

\

\begin{equation}\label{Gg}
    G^{1 \bar 1} \geq \min\{(a+b),b\} \sum_k \tilde g^{k \bk} ,
\end{equation}

\noindent and $\varphi''= (\varphi')^2$. This contradicts the maximum principle that $G^{i \bj} \nabla_i \nabla_{\bj} P(f) \leq 0$ at the maximum point $z_0$ of $P(f)$. 

\end{proof}

We derive direct gradient estimates for the solution $f$. Define the norms 

$$|\nabla v|_{G}^2 = G^{i \bj} v_{i} \ol{v_{j}},$$

$$|\nabla \nabla v|_{Gg}^2 = g^{i \bj}G^{k \bl} v_{ik} \ol{v_{lj}}, \quad |\nabla \bnabla v|_{Gg}^2 = g^{i \bj}G^{k \bl} v_{i\bl} \ol{v_{j \bk}}$$

\noindent where the indices here stand for covariant derivatives with respect to the background metric $g$. 

\

\begin{lemma}\label{lemma2}

Let $A>0$ be a constant.
 In normal coordinates at a point

    \begin{equation*}
        \begin{aligned}
            G^{i \bj} \nabla_i \nabla_{\bj} (|\nabla f|^2 + A f^2) \geq \;  & 2 f A(n- \tilde g^{i \bj} X_{i\bj}) +2 A\min\{a+b, b\}\sum_k \tilde g^{k \bk}  |\nabla f|^2 \\
            &- C|\nabla f|(1+\sum_i \tilde g^{i \bi})+G^{i \bj }R_{k \bj i \bp} f_p f_{\bk} .
        \end{aligned}
    \end{equation*}
    
\end{lemma}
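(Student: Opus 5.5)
The plan is to compute $G^{i\bj}\nabla_i\nabla_{\bj}$ of each summand separately at a point where $g_{i\bj}=\delta_{ij}$ and $dg_{i\bj}=0$, and then use the differentiated equation to remove the third-order terms.

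\textbf{The $Af^2$ term.} First,
$$G^{i\bj}\nabla_i\nabla_{\bj}(Af^2)=2AG^{i\bj}f_i f_{\bj}+2AfG^{i\bj}f_{i\bj}.$$
To handle the second term, I will write
$$G^{i\bj}f_{i\bj}=a\tilde g^{i\bj}f_{i\bj}+b\sum_k\tilde g^{k\bk}\,\Delta f .$$
Since $\tilde g_{i\bj}=X_{i\bj}+af_{i\bj}+b\Delta f\,\delta_{ij}$, this equals $\tilde g^{i\bj}(\tilde g_{i\bj}-X_{i\bj})=n-\tilde g^{i\bj}X_{i\bj}$. Hence
$$2AfG^{i\bj}f_{i\bj}=2Af\bigl(n-\tilde g^{i\bj}X_{i\bj}\bigr),$$
which is exactly the first term in the statement.

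For the gradient term I will use \eqref{elliptic+} in the form $G^{i\bj}=a\tilde g^{i\bj}+b\,\delta_{ij}\sum_k\tilde g^{k\bk}$, together with the lower bound $G\ge\min\{a+b,b\}\sum_k\tilde g^{k\bk}\,\mathrm{Id}$ as Hermitian matrices. This bound is the one used in \eqref{Gg}. It holds in Regime 1: when $a\ge0$, $G\ge b\,\mathrm{tr}(\tilde g^{-1})$ trivially. When $a<0$, note that $\tilde g^{-1}\le\mathrm{tr}(\tilde g^{-1})\,\mathrm{Id}$, so $G\ge(a+b)\,\mathrm{tr}(\tilde g^{-1})$. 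Therefore
$$2AG^{i\bj}f_if_{\bj}\ge 2A\min\{a+b,b\}\sum_k\tilde g^{k\bk}\,|\nabla f|^2 .$$

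\textbf{The $|\nabla f|^2$ term.} Next I expand
$$G^{i\bj}\nabla_i\nabla_{\bj}|\nabla f|^2=G^{i\bj}\bigl(f_{ki\bj}f_{\bk}+f_kf_{\bk i\bj}\bigr)+|\nabla\nabla f|_{Gg}^2+|\nabla\bnabla f|_{Gg}^2 .$$
The two squared terms are nonnegative and can be dropped. I will commute derivatives in the third-order terms:
$$f_{\bk i\bj}=f_{i\bj\bk},\qquad f_{ki\bj}=f_{i\bj k}+R\text{-terms}\cdot f_p .$$
The commutator in the second identity produces $G^{i\bj}R_{k\bj i\bp}f_pf_{\bk}$ (up to the paper's sign convention), which is the curvature term in the statement.

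It remains to treat $G^{i\bj}f_{i\bj k}$ and its conjugate. For this I differentiate the equation $\log\det\tilde g=\psi$ in the $z_k$ direction:
$$\tilde g^{i\bj}\bigl(\nabla_kX_{i\bj}+af_{i\bj k}+b(\Delta f)_k\delta_{ij}\bigr)=\psi_k .$$
This says $G^{i\bj}f_{i\bj k}=\psi_k-\tilde g^{i\bj}\nabla_kX_{i\bj}$. The terms $f_{\bk}$ and $f_k$ multiply this expression, so they contribute at least $-C|\nabla f|(1+\sum_i\tilde g^{i\bi})$. The constant $C$ comes from $|\nabla\psi|$ and $|\nabla X|$, and uses $|\tilde g^{i\bj}|\le\sum\tilde g^{i\bi}$ by positivity.

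\textbf{Main obstacle.} The main point to get right is the bookkeeping in the commutation. I need to check that the only uncontrolled curvature contribution is the displayed term $G^{i\bj}R_{k\bj i\bp}f_pf_{\bk}$, that it carries the correct index placement, and that $\Delta f$ commutes with $\nabla_k$ on a Kähler manifold. The latter holds, so $(\Delta f)_k=\sum_i f_{i\bi k}$ with no extra terms. After that, summing the pieces gives the lemma.
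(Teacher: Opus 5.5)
Your proposal is correct and follows essentially the same route as the paper. Both expand $G^{i\bj}\nabla_i\nabla_{\bj}$ of each term, rewrite $G^{i\bj}f_{i\bj}=n-\tilde g^{i\bj}X_{i\bj}$, and use $G\ge\min\{a+b,b\}\sum_k\tilde g^{k\bk}\,\mathrm{Id}$. The third-order terms are then replaced via the once-differentiated equation, with the commutator producing the curvature term. Your case split $a\ge 0$ versus $a<0$ for the lower bound on $G$ fills in a step the paper only asserts from \eqref{elliptic+}.
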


\begin{proof}

By differentiating the PDE by $z_k$ and $z_{\bk}$, using the commutation formulae

\

$$
\begin{aligned}
   &[\nabla_k, \nabla_{\bj}] f_i = -R_{k \bj i \bp} f_p,\\
   &[\nabla_\bk, \nabla_{\bj}] f_i = 0, 
\end{aligned}
$$

\noindent we get

\begin{equation}\label{g-1}
    \begin{aligned}
        &G^{i \bj} f_{k i \bj} = \nabla_k \psi - \tilde g^{i \bj}  \nabla_{k} X_{i \bj} + G^{i \bj }R_{k \bj i \bp} f_p,\\
        &G^{i \bj}  f_{\bk i \bj} = \nabla_{\bk} \psi - \tilde g^{i \bj} \nabla_{\bk} X_{i \bj}.
    \end{aligned}
\end{equation}

Hence

\begin{equation}\label{g-2}
    \begin{aligned}
        G^{i \bj} \nabla_i \nabla_{\bj} (|\nabla f|^2 + A f^2) &\geq |\nabla \nabla f| _{Gg}^2 + |\nabla \bnabla f|_{Gg}^2 + 2 Af \; G^{i \bj} f_{i \bj}\\
        &\quad +2 A |\nabla f|_{G}^2 + f_k G^{i \bj} f_{\bk i \bj} + f_{\bk} G^{i \bj} f_{k i \bj}.\\
    \end{aligned}
\end{equation}

Observe that 

\begin{equation}\label{g-3}
\begin{aligned}
        G^{i \bj} f_{i \bj} &= \tilde g^{i \bj} (a f_{i \bj} + b \delta_{i j} \sum_k f_{k \bk})\\
    & = \tilde g^{i \bj}(\tilde g_{i \bj} - X_{i \bj}) = n- \tilde g^{i \bj} X_{i\bj}.
\end{aligned}
\end{equation}

So now \eqref{g-2} becomes

\begin{equation}
\begin{aligned}
     G^{i \bj} \nabla_i \nabla_{\bj} (|\nabla f|^2 + A f^2) &\geq |\nabla \nabla f| _{Gg}^2 + |\nabla \bnabla f|_{Gg}^2 + 2 f A(n- \tilde g^{i \bj} X_{i\bj})\\
        &\quad +2 A |\nabla f|_{G}^2 - C|\nabla f|(1+\sum_i \tilde g^{i \bi}) + G^{i \bj }R_{k \bj i \bp} f_p f_{\bk}. \\
\end{aligned}
\end{equation}

By \eqref{elliptic+}

$$|\nabla f|_{G}^2\; \geq \min\{a+b,b\} \sum_k \tilde g^{k \bk}|\nabla f|^2.$$

This gives the desired inequality.

\end{proof}

At the point of maximum of $|\nabla f|^2 + A f^2$, it follows from Lemma \ref{lemma2} and \eqref{Gg} that 

\begin{equation}
    \begin{aligned}
        0 \geq \sum_k \tilde g^{k \bk} (2A\min\{a+b, b\} -C)|\nabla f|^2 - CA(1+\sum_i \tilde g^{i \bi})|\nabla f|
    \end{aligned}
\end{equation}

Note that $\sum\limits_k \tilde g^{k \bk} \geq n e^{-\frac{\psi}{n}}$ by the AM-GM inequality. Choosing $A$ larger than $\dfrac{C}{2\min\{a+b, b\}}$ now gives the gradient estimates $|\nabla f| \; \leq C$. 

\begin{theorem}\label{ABPexistence}
    Assuming $X >0$, there exists a smooth solution to \eqref{abMA-eqn+} unique up to an additive constant.
\end{theorem}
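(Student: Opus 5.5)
We prove Theorem \ref{ABPexistence} by the continuity method, using the $C^2$ and gradient estimates just derived together with a $C^0$ bound. Two features need care. First, the right-hand side of \eqref{abMA-eqn+} must be adjusted by a constant, as in Theorem \ref{ab-MAeqn-theorem}. Second, the linearized operator $G^{i\bj}\nabla_i\nabla_{\bj}$ annihilates constants, so we normalize $\sup_M f = 0$, or equivalently $\int_M f\,\omega^n=0$.

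\textbf{Continuity path.} For $t\in[0,1]$ consider
\begin{equation}
\log\det(X + a i\pbp f_t + b\Delta f_t\,\omega) = t\psi + (1-t)\log\det X + c_t ,\qquad \int_M f_t\,\omega^n=0 .
\end{equation}
At $t=0$ the pair $f_0=0$, $c_0=0$ is a solution. Let $T$ be the set of $t$ for which a smooth solution $(f_t,c_t)$ exists.

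\textbf{Openness.} The linearization at $(f_t,c_t)$ is
\begin{equation}
(v,c)\mapsto G^{i\bj}v_{i\bj} - c .
\end{equation}
By \eqref{elliptic+}, with $a+b\ge 0$, $b\ge0$ and $(a,b)\neq(0,0)$, we have $G\ge \min\{a+b,b\}\,\mathrm{tr}(\tilde g^{-1})\,\mathrm{Id}$ whenever $\min\{a+b,b\}>0$. In the borderline cases $b=0$ or $a+b=0$ the operator is still elliptic: for $b=0$ it is $a\tilde g^{i\bj}$ with $a>0$, and for $a+b=0$ the eigenvalues are $b(\mathrm{tr}\,\tilde g^{-1}-\tilde g^{i\bi})>0$ when $n\ge2$.

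The operator $L=G^{i\bj}\nabla_i\nabla_{\bj}$ is a second order elliptic operator without zeroth order term. By the strong maximum principle its kernel consists of the constants, and its Fredholm index is zero, so its cokernel is one-dimensional, spanned by a positive function $\sigma$. This gives the decomposition $C^{k,\gamma}=L(C^{k+2,\gamma}_0)\oplus\mathbb R$. Hence the map $(v,c)\mapsto Lv-c$ from $C^{k+2,\gamma}_0\times\mathbb R$ to $C^{k,\gamma}$ is an isomorphism, and the implicit function theorem shows that $T$ is open.

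\textbf{Closedness: $C^0$ bound and the constant $c_t$.}
\begin{itemize}
\item \emph{Bound on $c_t$.} At the maximum point of $f_t$ we have $i\pbp f_t\le0$ and $\Delta f_t\le 0$, so $\tilde g\le X$ there. This gives $c_t\le C$. The minimum point gives $c_t\ge -C$.
\item \emph{$C^0$ bound.} The gradient argument after Lemma \ref{lemma2} already controls $|\nabla f|^2+Af^2$ in terms of $\sup|f|$. So a $C^0$ estimate is the remaining input, and this is where I expect the main obstacle.
\item \emph{First attempt.} Note that $\tilde g>0$ forces $\mathrm{tr}_\omega\tilde g>0$, that is $(a+nb)\Delta f > -\mathrm{tr}_\omega X$. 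When $a+nb>0$ this says $\Delta f\ge -C$. For functions with $\int_M f\,\omega^n=0$, this yields $\sup f\le C$ and an $L^1$ bound by the standard Green's function argument.
\item \emph{Lower bound.} To bound $\inf f$ I would combine this with the gradient estimate in its interpolated form $|\nabla f|\le C(1+\sup|f|)^{1/2}$. Then $\operatorname{osc} f\le C\,\mathrm{diam}(M)(1+\operatorname{osc} f)^{1/2}$ in a normalization where $\sup f=0$, which gives $\operatorname{osc} f\le C$.
\item \emph{Degenerate case.} For $a+nb\le 0$, or if the $L^1$ route fails, I would run the same argument with $-f$, or fall back on an ABP-type argument as in \cite{Szekelyhidi18,TW17}.
\end{itemize}

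\textbf{Higher regularity.} With $C^0$, gradient and $i\pbp f$ bounds in hand, $\tilde g$ is uniformly bounded above. By $\det\tilde g=e^{\psi_t}$ it is also uniformly bounded below, so the equation is uniformly elliptic. The operator $\log\det(X+a i\pbp f+b\Delta f\,\omega)$ is concave in $D^2f$, since it is the composition of the concave $\log\det$ with a linear map. The Evans–Krylov theorem, in its complex version \cite{TW17}, then gives $C^{2,\gamma}$ bounds, and Schauder bootstrapping gives all higher derivatives. By Arzelà–Ascoli, $T$ is closed, so $T=[0,1]$.

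\textbf{Uniqueness.} Let $f_1,f_2$ be two solutions. Subtracting the equations and integrating along the segment between them gives
\begin{equation}
\tilde G^{i\bj}(f_1-f_2)_{i\bj} = c_1-c_2 ,
\end{equation}
where $\tilde G$ is an averaged, still elliptic, version of $G$. If $c_1>c_2$, evaluating at the maximum point of $f_1-f_2$ gives a contradiction, and symmetrically if $c_1<c_2$. Hence $c_1=c_2$, and the maximum principle forces $f_1-f_2$ to be constant.
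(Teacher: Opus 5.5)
\textbf{Overall.} Your plan is the same as the paper's: the continuity method with a normalizing constant, a $C^0$ bound, the gradient and $C^2$ estimates, then Evans--Krylov and Schauder. Your openness argument, the bound on $c_t$ and the uniqueness argument are correct, and more detailed than the paper's sketch.

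Two small remarks. First, in Regime 1 one always has $a+nb=(a+b)+(n-1)b>0$, so your degenerate case never occurs. Second, when $a<0$ the inequality $\tilde g\le X$ at a maximum of $f_t$ is not a mere sign check. It uses $a+b\ge0$, which makes $H\mapsto aH+b\,\mathrm{tr}(H)\,\mathrm{Id}$ preserve positive semidefiniteness.

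\textbf{The gap: the lower bound for $f$.} The gradient argument after Lemma \ref{lemma2} does not give $|\nabla f|\le C(1+\sup_M|f|)^{1/2}$. At the maximum point $x_0$ of $Q=|\nabla f|^2+Af^2$, the term $2fA(n-\tilde g^{i\bj}X_{i\bj})$ can only be bounded below by $-2A\sup_M|f|\,(n+C\sum_i\tilde g^{i\bi})$; the paper absorbs it using the $C^0$ bound. One therefore gets $|\nabla f|^2(x_0)\le C(1+\sup_M|f|)$, and then everywhere
$|\nabla f|^2\le Q(x_0)\le C(1+\sup_M|f|)+A\sup_M f^2$.
This bound is linear in $\sup_M|f|$ with coefficient $\sqrt A$, and $A$ must be taken large. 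Your inequality then becomes roughly $\mathrm{osc}\,f\le \mathrm{diam}(M)\,(C+\sqrt A\,\mathrm{osc}\,f)$, which does not close. No square-root interpolation of this type is available in general. This is why the $C^0$ bound has to be proved first and independently of the gradient bound.

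\textbf{The fix.} The ABP argument you list as a fallback is the essential step, and it is exactly what the paper invokes from \cite{Szekelyhidi18}. Your first bullet already supplies its first half: $\Delta f\ge -C$ gives $\int_M|f|\,\omega^n\le C$ once $\sup_M f=0$. The second half replaces your oscillation step.

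Let $L=\inf_M f$ be attained at the center of a coordinate unit ball. Apply the Alexandroff--Bakelman--Pucci estimate to $v=f+\epsilon|z|^2$; this gives $\epsilon^{2n}\le C\int_P\det D^2v$ over the lower contact set $P$. On $P$ one has $D^2v\ge0$, hence $i\pbp f\ge -C\epsilon\,\omega$. The map $H\mapsto aH+b\,\mathrm{tr}(H)\,\mathrm{Id}$ preserves positive semidefiniteness and multiplies traces by $a+nb>0$. This yields $\tilde g\ge X/2$ for small $\epsilon$. The equation, together with your bound on $c_t$, then gives $\tilde g\le C$, and therefore $D^2v\le C$ on $P$.

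Hence $|P|\ge c\,\epsilon^{2n}$. On the other hand $f\le v\le L+\epsilon/2$ on $P$, so the $L^1$ bound forces $L\ge -C$. With this in place of the interpolation step, the rest of your closedness argument goes through.
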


   The $C^0$ estimates for $f$ can be obtained using the ABP maximum principle as in \cite{Szekelyhidi18}. Note that strong positivity implies the positive-definiteness of the coefficient matrix, which is sufficient for the ABP maximum principle. We omit the argument here. Now by Evans-Krylov theory and the Schauder theorem, $|f|_{C^{\infty}}$ is uniformly bounded and Theorem \ref{ABPexistence} follows from standard continuity method arguments.

\

\item {\bf Regime $2$: $a+b < 0$, $b > 0$ or $a+b >0$, $b < 0$.}

\

 Let $\kappa(A) = \dfrac{\lambda_{max}(A)}{\lambda_{min}(A)}$ denote the condition number of a matrix $A$, the ratio of the largest to the smallest eigenvalue. Without loss of generality, we assume that $a+b>0$ and $b<0$. Otherwise replace $f$ by $-f$. To prove part $2$ of Theorem \ref{ab-MAeqn-theorem}, we assume $\kappa(X) < \dfrac{a}{n|b|}$.

 In orthonormal coordinates that diagonalize $\tilde g$ at a point, we write $\tilde g_{i \bj} = \lambda_i \delta_{i j}$ where $\lambda_1 \geq  \lambda_2 \geq \hdots \geq \lambda_n >0$.

 $$G^{i \bj} = (a\lambda_i^{-1} - |b|  \sum_{k} \lambda_k^{-1}) \delta_{i j}. $$

So $G^{i \bj} = \mu_i \delta_{i j}$ is diagonalized at this point with eigenvalues  $0 < \mu_1 \leq \hdots \leq \mu_n $. Clearly

\begin{equation}\label{smallest-ev}
    \mu_1 \geq a\lambda_1^{-1}\left(1 - \frac{n|b|}{a}\kappa(\tilde{g})\right).
\end{equation}

For uniform ellipticity along the continuity path, a positive lower bound on $\mu_1$ is necessary. This is automatic in Regime $1$ from \eqref{Gg}. But this might not hold in Regime $2$ because of the parameter constraints. Define the set of Hermitian metrics

$$\mathcal G_{\delta} = \{\tilde g >0 \; | \; \kappa(\tilde g) \leq \kappa(X) + \delta \},$$

\noindent where $0<\delta< \left(\dfrac{a}{n|b|} - \kappa(X)\right)$ is some constant. Clearly $X \in \mathcal G_{\delta}$.

\

Let $\tilde g_f = X + a i \pbp f + b \Delta f \;\omega$ and fix a $0 < \gamma <1$. Then we prove the following.

\begin{theorem}\label{QIFT}

There exists a small constant $\epsilon >0$ and an $R>0$ such that if
$$|\psi - \log\det(X)|_{C^{0, \gamma}} < \epsilon$$

\noindent  then there is a unique $f \in B_R(0) \subset C^{2, \gamma}_{0}$ such that $\tilde g_f \in \mathcal G_{\delta}$ and

$$\log\det(\tilde g_f) = \psi + c.$$

\noindent for some constant $c$. In addition,

$$|\kappa(\tilde g_f) - \kappa(X)|_{C^0(M)} \leq C_{\delta} \; | \log \det \tilde g_f -  \log \det X|_{C^{0,\gamma}(M)},$$ 

\noindent and
$$|f|_{C^{2, \gamma}} \leq 2\;C_1 |\psi - \log\det(X)|_{C^{0, \gamma}}$$

\noindent for uniform positive constants $C_1$ and $C_{\delta}$.

\end{theorem}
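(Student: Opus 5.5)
The plan is to apply the implicit function theorem in H\"older spaces to the nonlinear map
$$\mathcal F : C^{2,\gamma}_0(M)\times \mathbb R \to C^{0,\gamma}(M), \qquad \mathcal F(f,c) = \log\det(\tilde g_f) - \log\det(X) - c,$$
where $C^{2,\gamma}_0$ denotes functions with $\int_M f\,\omega^n = 0$. This map is defined on a neighbourhood of $(0,0)$ on which $\tilde g_f>0$, and it is smooth there. We have $\mathcal F(0,0)=0$, and we want to solve $\mathcal F(f,c) = \psi - \log\det X$ when the right-hand side is small in $C^{0,\gamma}$.

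The first step is to compute the linearization at $(0,0)$:
$$D\mathcal F_{(0,0)}(v,t) = G_X^{i\bj} v_{i\bj} - t, \qquad G_X^{i\bj} = aX^{i\bj} + b\,\delta_{ij}\sum_k X^{k\bk}.$$
In coordinates diagonalizing $X$, the estimate \eqref{smallest-ev} applied to $X$ gives smallest eigenvalue of $G_X$ at least $a\lambda_1(X)^{-1}\bigl(1-\tfrac{n|b|}{a}\kappa(X)\bigr)$, which is positive because $\kappa(X)<a/(n|b|)$. Thus $L=G_X^{i\bj}\nabla_i\nabla_{\bj}$ is a uniformly elliptic second-order operator with smooth coefficients and no zeroth-order term.

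Next I invert $(v,t)\mapsto Lv - t$ as a map $C^{2,\gamma}_0\times\mathbb R\to C^{0,\gamma}$. By the maximum principle the kernel of $L$ consists of constants. By Fredholm theory (Schauder estimates together with the index-zero property) the cokernel of $L$ is one-dimensional, spanned by a positive function $\sigma$ in the kernel of the formal adjoint $L^*$. I choose $\sigma$ positive using the Krein--Rutman argument, or by the standard argument for adjoints of maximum-principle operators, and normalize it by $\int\sigma\omega^n=1$. Then for a given $h\in C^{0,\gamma}$ we set $t=-\int h\sigma\,\omega^n$ and solve $Lv = h+t$ with $v\in C^{2,\gamma}_0$. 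Schauder theory gives $\|v\|_{C^{2,\gamma}}+|t|\le C_1\|h\|_{C^{0,\gamma}}$, so $D\mathcal F_{(0,0)}$ is an isomorphism.

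With the isomorphism in hand, the quantitative inverse function theorem applies. The second derivative of $\mathcal F$ is bounded on a ball $B_R(0)$, because there $\tilde g_f\ge \tfrac12 X$ once $R$ is small. The theorem then produces $\epsilon$ such that, for $\|\psi-\log\det X\|_{C^{0,\gamma}}<\epsilon$, there is a unique $(f,c)$ in $B_R$ with $\mathcal F(f,c)=\psi-\log\det X$. It also gives $\|f\|_{C^{2,\gamma}}\le 2C_1\|\psi-\log\det X\|_{C^{0,\gamma}}$, since the contraction-mapping constant can be taken to be $1/2$. The sign of $c$ is absorbed as in the statement.

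The remaining task is the condition-number control, which I expect to be the main technical point. I would use the fact that $\kappa$ is locally Lipschitz on positive Hermitian matrices, with Lipschitz constant depending on bounds for $\lambda_{\min}$ and $\lambda_{\max}$. Hence $|\kappa(\tilde g_f)-\kappa(X)|\le C\,|\tilde g_f - X|_{C^0}\le C\|f\|_{C^{2,\gamma}}$.

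To express the right-hand side through $\log\det$, recall that $\log\det\tilde g_f-\log\det X = \mathcal F(f,0)$, and note the normalization $c$: the function $\log\det\tilde g_f-\log\det X$ equals $\psi-\log\det X + c$. I then invert the linearization once more. From $\|f\|_{C^{2,\gamma}}\le 2C_1\|Lf\|_{C^{0,\gamma}}$ on $C^{2,\gamma}_0$ (where $L$ is injective modulo constants), together with $\|Lf - \mathcal F(f,0)\|\le C\|f\|^2$, smallness of $R$ gives $\|f\|_{C^{2,\gamma}}\le C\|\log\det\tilde g_f-\log\det X\|_{C^{0,\gamma}}$. This yields the stated bound with constant $C_\delta$.

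Finally, shrinking $\epsilon$ makes $|\kappa(\tilde g_f)-\kappa(X)|\le\delta$, so $\tilde g_f\in\mathcal G_\delta$. Uniqueness within $B_R$ is part of the inverse function theorem. The care needed is in making the constants uniform: all constants depend only on $X$, $\delta$, $a$, $b$ and $\gamma$.
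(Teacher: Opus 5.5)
Your proposal is correct and follows essentially the paper's route: a quantitative implicit-function/contraction argument around $f=0$, which inverts the linearization $a\Delta_X + b\,\mathrm{tr}_X(\omega)\Delta_\omega$ modulo constants using the one-dimensional kernel of its adjoint, and then uses local Lipschitz continuity of $\kappa$. The differences are cosmetic: you carry $c$ as an extra unknown on $C^{2,\gamma}_0\times\mathbb{R}$ with an $\omega^n$-mean-zero normalization instead of projecting with $\pi$ onto the $\rho$-weighted mean-zero space, and you get the $\kappa$-bound by inverting the linearization again with the quadratic remainder, whereas the paper reads it off from the fixed-point identity $\tilde\Psi=\pi(\log\det\tilde g_f-\log\det X)$.
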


\begin{proof}

 This is a quantitative version of the implicit function theorem proved using the contraction mapping theorem. Denote the linearization of the PDE at $0$ by $\mathcal{L}$, so that

$$\mathcal{L}f = a\Delta_{X}f + b \;\mbox{tr}_{X}(\omega) \Delta_{\omega} f .$$

By the assumption on $X$, this is uniformly elliptic with kernel being constant functions. Hence by Fredholm theory, the kernel of the adjoint $L^*$ is $1$-dimensional. Let $\rho(x)$ span this kernel and $\int_M \rho \; \omega^n = 1$. Then define 
 
 $$C_0^{k,\gamma} = \{f \in C^{k,\gamma}(M) | \int_M f \rho \; \omega^n= 0  \},$$

  $$\Omega^{k, \gamma} = \{f \in C_0^{k, \gamma} |  \;\; \tilde g_f >0\},$$

 \noindent and 

 $$\Omega_{R'} = \{f \in C_0^{2,\gamma} | \;\;|f|_{C^{2,\gamma}} < R'\}$$

 \

 \noindent where $R'$ is defined below. Then $\mathcal{L}: C_0^{2, \gamma} \to C_0^{0, \gamma}$ has zero kernel, and is invertible in this domain by Fredholm alternative. Let $C_1= \|\mathcal{L}^{-1}\|$ be the operator norm. Define the projection $\pi: C^{k, \gamma} \to C_0^{k, \gamma} $ by

 $$\pi(f) = f- \int_M f \rho \omega^n .$$

 \

 
 
Note that $|\pi(f)|_{C^0}< 2|f|_{C^0}$. To write \eqref{abMA-eqn+} as a fixed-point equation, define $G(f) = \pi \circ \log \det( X + a i \pbp f + b \Delta f \omega)$ and the map $\Phi: \Omega_{R'} \to \Omega_{R'}$ by

$$\Phi(f) = f + \mathcal L ^{-1}\left[ \pi\circ\psi-( G(0) + \mathcal{L}(f) + \mathcal{R}(f) ) \right]$$

\noindent with

$$\mathcal{R}(f) = G(f) - G(0) - \mathcal L(f).$$

\

Both $\mathcal{R}$ and $G$ are defined from $\Omega^{2, \gamma} \to C_0^{0,\gamma}$. By choosing $\epsilon$ small we ensure that $\Phi$ maps to $\Omega_{R'}$. $\mathcal{R}(f)$ denotes the quadratic error in the Taylor approximation of $G(f)$ near $f=0$, and can be estimated as follows. 

$$|\mathcal R(f)|_{C^{0, \gamma}} \leq \int_0^1 (1-t)|D^2G(tf)(f,f)|_{C^{0, \gamma}} dt,$$

\noindent where the Fr\'echet derivative $D^2G(tf)(f,f)$ can be evaluated as

$$D^2G(tf)(f,f) = - \pi(\tilde g_{tf}^{i \bl} \tilde g_{tf}^{k \bj} (V_f)_{k \bl} (V_f)_{i \bj}),$$

\noindent for $V_f = a i \pbp f + b \Delta f \omega$. Let $R_0 = \dfrac{(a + n |b|)^{-1} \lambda_{min}(X)}{2}$. In the ball $B_{R_0}(0)$, $\tilde g_{tf}$ is uniformly equivalent to $X$, and consequently

$$|D^2G(tf)(f,f)|_{C^{0, \gamma}} \leq C'_2 |V_f|_{C^{0, \gamma}}^2 \leq C_2 |f|^2_{C^{2, \gamma}}.$$

So $|\mathcal R(f)|_{C^{0, \gamma}} \leq  C_2 |f|^2_{C^{2, \gamma}}.$ Similarly for any $h$,

$$ |D \mathcal R(f)h|_{C^{0, \gamma}} \leq 2 C_2 R' |h|_{C^{2, \gamma}}.$$

This is sufficient to show that $\Phi$ is a contraction mapping in the ball $B_{R'}(0)$, where

$$R' < \min\left\{R_0, \frac{1}{2 C_{1}C_{2}} \right\}.$$

Indeed, we have 

$$\Phi(f) = \mathcal L^{-1}[\tilde{\Psi} -\mathcal{R}(f)]$$

\noindent \noindent with $\tilde{\Psi} = \pi\circ\psi - G(0)$, and 

$$|\Phi (f_2) - \Phi (f_1)|_{C^{2, \gamma}} \leq C_1 |\mathcal{R}(f_2) - \mathcal{R}(f_1)|_{C^{0, \gamma}} \leq 2C_1 C_2 R'|f_2 - f_1|_{C^{2, \gamma}}.$$

By the choice of $R'$ this is a contraction. So by Banach contraction mapping theorem, there is a unique fixed point $f \in B_{R'}(0)$ for $\Phi$. Clearly the fixed-point solution of $\Phi$ satisfies $G(f) = \pi\circ\psi$. We also get that
\begin{equation}
    \begin{aligned}
        |f|_{C^{2, \gamma}} &= \; |\mathcal{L}^{-1}[\tilde \Psi - \mathcal R(f)]|_{C^{2, \gamma}}\\
        & \leq \; C_1 |\tilde \Psi|_{C^{0, \gamma}} + C_1C_2 |f|^2_{C^{2, \gamma}}\\
        &\leq C_1 |\tilde \Psi|_{C^{0, \gamma}} + \frac{1}{2} |f|_{C^{2, \gamma}}
    \end{aligned}
\end{equation}

Hence $|f|_{C^{2, \gamma}} \leq 2C_1|\tilde \Psi|_{C^{0, \gamma}}$. By local Lipschitz continuity of the condition number

 \begin{equation}
     \begin{aligned}
         |\kappa(\tilde g_f) - \kappa(X)|_{C^{0}} &\leq C_{\kappa} |\tilde g - X|_{C^0} \\
         &\leq C_{\kappa}(a+n |b|) |f|_{C^{2, \gamma}} \leq C'_{\delta} |\tilde \Psi|_{C^{0, \gamma}}\\
         & \leq 2C'_{\delta} | \log \det \tilde g_f -  \log \det X|_{C^{0,\gamma}(M)}.
     \end{aligned}
 \end{equation}

For $\epsilon$ small, this shows that $\tilde g_f \in \mathcal G_{\delta}$. Here $C_{\kappa}$ is the local Lipschitz constant of $\kappa$ near $X$.

Set $R = R'$ to complete the proof with $C_{\delta} = 2C'_{\delta}$.

\end{proof}

\

Theorem \ref{QIFT} implies that the operator remains uniformly elliptic in this range and gives $C^{2,\gamma}$ estimates for $f$ in $B_R(0)$. Now applying the Schauder theorem gives $C^{\infty}$ estimates for $f$. This completes the proof of Theorem \ref{ab-MAeqn-theorem}. 

\

\vspace{1cm}

{\bf Open Question:} It would be desirable to remove the small data assumption $|\psi - \log \det(X)|_{C^{0,\gamma}} < \epsilon$ in Regime $2$. But it is unclear how to ensure uniform ellipticity along the continuity path. It might be fruitful to consider other paths that can achieve this. The advantage in Regime $2$ is that $C^2$ estimates for the solution are immediate from the cone condition: $\mu_1>0$ implies that the ratio $\dfrac{\lambda_1}{\lambda_n}$ is bounded above. This combined with the PDE, uniformly bounds all $\lambda_i$. Possibly one could search for structural properties of the PDE that avoid the boundary of the elliptic cone, similar to how the Monge-Amp\`ere equation avoids the boundary hyperplanes $\{\lambda_i = 0\}$.

\end{itemize}

\clearpage

\bibliographystyle{plain}
\bibliography{references}

@article {BDPP13,
    AUTHOR = {Boucksom, S\'{e}bastien and Demailly, Jean-Pierre and P\u{a}un, Mihai and Peternell, Thomas},
     TITLE = {The pseudo-effective cone of a compact {K}\"{a}hler manifold and varieties of negative {K}odaira dimension},
   JOURNAL = {J. Algebraic Geom.},
  FJOURNAL = {Journal of Algebraic Geometry},
    VOLUME = {22},
      YEAR = {2013},
    NUMBER = {2},
     PAGES = {201--248},
      ISSN = {1056-3911},
   MRCLASS = {32J27 (14E30 32E30)},
  MRNUMBER = {3019449},
       DOI = {10.1090/S1056-3911-2012-00574-8},
       URL = {https://doi.org/10.1090/S1056-3911-2012-00574-8}
}

@inproceedings{Dem2010,
  title={Holomorphic Morse inequalities and asymptotic cohomology groups: a tribute to Bernhard Riemann},
  author={Demailly, Jean-Pierre},
  booktitle={Proceedings of the Riemann International School of Mathematics, Advances in Number Theory and Geometry},
  address={Verbania, Italy},
  month={April},
  year={2009},
  note={arXiv:1003.5067 [math.CV]}
}

@article{DP25,
AUTHOR = {Dinew, S\l awomir and Dan Popovici},
TITLE = {m-Pseudo-effectivity and a Monge-Ampère-Type Equation for Forms of Positive Degree},
JOURNAL = {arXiv:2510.27362},
YEAR = {2025}
}

@misc{George25,
      title={Complex Monge-Amp\`ere equation for positive $(p,p)$-forms on compact K\"ahler manifolds}, 
      author={Mathew George},
      year={2025},
      eprint={2411.06497},
      archivePrefix={arXiv},
      primaryClass={math.AP},
      url={https://arxiv.org/abs/2411.06497}, 
}

@book {Griffiths-Harris,
    AUTHOR = {Griffiths, Phillip and Harris, Joseph},
     TITLE = {Principles of algebraic geometry},
    SERIES = {Wiley Classics Library},
      NOTE = {Reprint of the 1978 original},
 PUBLISHER = {John Wiley \& Sons, Inc., New York},
      YEAR = {1994},
     PAGES = {xiv+813},
      ISBN = {0-471-05059-8},
   MRCLASS = {14-01},
  MRNUMBER = {1288523}
}

@article {GQY19,
    AUTHOR = {Guan, Bo and Qiu, Chunhui and Yuan, Rirong},
     TITLE = {Fully nonlinear elliptic equations for conformal deformations
              of {C}hern-{R}icci forms},
   JOURNAL = {Adv. Math.},
  FJOURNAL = {Advances in Mathematics},
    VOLUME = {343},
      YEAR = {2019},
     PAGES = {538--566},
      ISSN = {0001-8708,1090-2082},
   MRCLASS = {58J05 (35J60 53C55 58J32)},
  MRNUMBER = {3883214},
MRREVIEWER = {Mohammed\ El A\"{\i}di, Universidad Nacional de Colombia},
       DOI = {10.1016/j.aim.2018.11.008},
       URL = {https://doi.org/10.1016/j.aim.2018.11.008},
}

@article {Lamari99,
    AUTHOR = {Lamari, Ahc\`ene},
     TITLE = {Courants k\"ahl\'eriens et surfaces compactes},
   JOURNAL = {Ann. Inst. Fourier (Grenoble)},
  FJOURNAL = {Universit\'e{} de Grenoble. Annales de l'Institut Fourier},
    VOLUME = {49},
      YEAR = {1999},
    NUMBER = {1},
     PAGES = {vii, x, 263--285},
      ISSN = {0373-0956,1777-5310},
   MRCLASS = {32J27 (32C30 32J15 32Q15)},
  MRNUMBER = {1688140},
MRREVIEWER = {Thierry\ Bouche},
       DOI = {10.5802/aif.1673},
       URL = {https://doi.org/10.5802/aif.1673},
}

@article {Popovici16,
    AUTHOR = {Popovici, Dan},
     TITLE = {Sufficient bigness criterion for differences of two nef classes},
   JOURNAL = {Math. Ann.},
  FJOURNAL = {Mathematische Annalen},
    VOLUME = {364},
      YEAR = {2016},
    NUMBER = {1-2},
     PAGES = {649--655},
      ISSN = {0025-5831},
   MRCLASS = {32J27 (14F40)},
  MRNUMBER = {3451399},
       DOI = {10.1007/s00208-015-1224-2},
       URL = {https://doi.org/10.1007/s00208-015-1224-2}
}

@article {Szekelyhidi18,
    AUTHOR = {Sz\'{e}kelyhidi, G\'{a}bor},
     TITLE = {Fully non-linear elliptic equations on compact {H}ermitian
              manifolds},
   JOURNAL = {J. Differential Geom.},
  FJOURNAL = {Journal of Differential Geometry},
    VOLUME = {109},
      YEAR = {2018},
    NUMBER = {2},
     PAGES = {337--378},
      ISSN = {0022-040X,1945-743X},
   MRCLASS = {58J05 (32W50 35J60 35J96 35R01 53C55)},
  MRNUMBER = {3807322},
MRREVIEWER = {Bianca\ Santoro},
       DOI = {10.4310/jdg/1527040875},
       URL = {https://doi.org/10.4310/jdg/1527040875},
}

@article {STW17,
    AUTHOR = {Sz\'{e}kelyhidi, G\'{a}bor and Tosatti, Valentino and
              Weinkove, Ben},
     TITLE = {Gauduchon metrics with prescribed volume form},
   JOURNAL = {Acta Math.},
  FJOURNAL = {Acta Mathematica},
    VOLUME = {219},
      YEAR = {2017},
    NUMBER = {1},
     PAGES = {181--211},
      ISSN = {0001-5962,1871-2509},
   MRCLASS = {53C55 (32C36 32Q99)},
  MRNUMBER = {3765661},
MRREVIEWER = {Keizo\ Hasegawa},
       DOI = {10.4310/ACTA.2017.v219.n1.a6},
       URL = {https://doi.org/10.4310/ACTA.2017.v219.n1.a6},
}

@article {TW17,
    AUTHOR = {Tosatti, Valentino and Weinkove, Ben},
     TITLE = {The {M}onge-{A}mp\`ere equation for {$(n-1)$}-plurisubharmonic
              functions on a compact {K}\"{a}hler manifold},
   JOURNAL = {J. Amer. Math. Soc.},
  FJOURNAL = {Journal of the American Mathematical Society},
    VOLUME = {30},
      YEAR = {2017},
    NUMBER = {2},
     PAGES = {311--346},
      ISSN = {0894-0347,1088-6834},
   MRCLASS = {32W20 (32Q15 32U05 53C55)},
  MRNUMBER = {3600038},
MRREVIEWER = {Bianca\ Santoro},
       DOI = {10.1090/jams/875},
       URL = {https://doi.org/10.1090/jams/875},
}

@article {Xiao2015,
    AUTHOR = {Xiao, Jian},
     TITLE = {Weak transcendental holomorphic {M}orse inequalities on compact {K}\"ahler manifolds},
   JOURNAL = {Ann. Inst. Fourier (Grenoble)},
  FJOURNAL = {Annales de l'Institut Fourier},
    VOLUME = {65},
      YEAR = {2015},
    NUMBER = {3},
     PAGES = {1367--1379},
      ISSN = {0373-0956},
       DOI = {10.5802/aif.2959},
       URL = {https://doi.org/10.5802/aif.2959}
}

\end{document}